%
\documentclass[12pt, reqno]{amsart}
\usepackage{amsmath, amsthm, amscd, amsfonts, amssymb, graphicx, color}
\usepackage[bookmarksnumbered, colorlinks, plainpages]{hyperref}
\usepackage[numbers,sort&compress]{natbib}
\hypersetup{colorlinks=true,linkcolor=red, anchorcolor=green, citecolor=cyan, urlcolor=red, filecolor=magenta, pdftoolbar=true}

\textheight 22.5truecm \textwidth 14.5truecm
\setlength{\oddsidemargin}{0.35in}\setlength{\evensidemargin}{0.35in}

\setlength{\topmargin}{-.5cm}

\newtheorem{theorem}{Theorem}[section]
\newtheorem{lemma}[theorem]{Lemma}
\newtheorem{proposition}[theorem]{Proposition}

\theoremstyle{definition}
\newtheorem{definition}[theorem]{Definition}

\theoremstyle{remark}
\newtheorem{remark}[theorem]{Remark}
\numberwithin{equation}{section}

\begin{document}

\setcounter{page}{1}

\title[A property in vector-valued function spaces]{A property in vector-valued function spaces}
\author[Kexin Zhao \MakeLowercase{and} Dongni Tan*]{Kexin Zhao $^1$  \MakeLowercase{and} Dongni Tan$^{2*}$}

\address{$^{1}$Department of Mathematics, Tianjin University of Technology, Tianjin 300384, P.R. China.}
\email{\textcolor[rgb]{0.00,0.00,0.84}{1435080446@qq.com}}

\address{$^{2}$Department of Mathematics, Tianjin University of Technology, Tianjin 300384, P.R. China.}
\email{\textcolor[rgb]{0.00,0.00,0.84}{tandongni0608@sina.cn}}
\let\thefootnote\relax\footnote{The second author is supported by the Natural Science Foundation of China (Grant Nos. 11371201, 11201337, 11201338, 11301384). \newline \indent $^{*}$Corresponding author}

\subjclass[2010]{Primary 46B04; Secondary 46B20.}

\keywords{the Mazur-Ulam property, vector-valued, Tingley's problem}

\begin{abstract} This paper deals with a property which is equivalent to generalised-lushness for separable spaces. It thus may be seemed as a geometrical property of a Banach space which ensures the space to have the Mazur-Ulam property.
We prove that if a Banach space $X$ enjoys this property if and only if $C(K,X)$ enjoys this property. We also show the same result holds for  $L_\infty(\mu,X)$ and $L_1(\mu,X)$.
\end{abstract}  \maketitle

\section{\textbf{Introduction }}Let us first give some notation. For a Banach space $X$,  $B_X$, $S_X$ and $X^*$  will stand for its unit ball, its unit sphere
and its dual space, respectively.  All spaces are over the real
field. A slice is a subset of $B_X$ of the form
\begin{align*}
S(x^*,\alpha)=\{x\in B_X:  x^*(x)>1-\alpha\},
\end{align*}
where $x^*\in S_{X^*}$ and $0<\alpha<1$.
A topic now known as
{\it Tingley's problem} or {\it the isometric extension problem} was first raised by D. Tingley \cite{3}. It is described as follows: let $T$ be a surjective isometry between $S_X$ and $S_Y$. Is it true that $T$ extends to a linear isometry
$U : X\rightarrow Y$ of the corresponding spaces?

Although this problem for general spaces remains unsolved even in dimension two, there is a number of publications devoted to Tingley's problem (say,
Zentralblatt Math. shows 57 related papers published from 2002 to
2019). The positive answers for many classical Banach spaces were given in \cite{KM,THL} and the references
therein. It is well worth mentioning that there is a fruitful series of recent papers dealing
with Tingley's problem and related questions for operator algebras, for example, see \cite{BF,F1,F2}. The interested reader is referred to the survey \cite{Pe18} for more information on operator algebras, and for other recent contributions not considered in the survey, please see \cite{C,CA,KP,WH}.

The notion of the Mazur-Ulam property was introduced by Cheng and Dong in \cite{CD}:  a (real) Banach
space $X$ is said to have {\it the Mazur-Ulam property} (MUP) if for every
Banach space $Y$ every surjective isometry between $S_X$ and $S_Y$ extends to a real linear isometry from $X$ onto $Y$. Kadets and Mart\'{i}n \cite{KM} proved that all finite-dimensional polyhedral spaces (i.e. those spaces whose unit ball is a polyhedron) have the MUP.  In order to show that a large class of Banach spaces enjoy the MUP, Tan, Huang and Liu introduced in \cite{THL} the notion of generalized-lushness.
\begin{definition}
A Banach space $X$ is said to be generalized-lush (GL) if for every $x\in S_{X}$ and every $\varepsilon>0$, there exists a slice $S_{x^*}:=S(x^{*},\varepsilon)$ with $x^{*}\in S_{X^{*}}$ such that
\[x\in S_{x^*} \quad \mbox{and} \quad \mbox{dist}(y,S_{x^*})+\mbox{dist}(y,-S_{x^*})<2+\varepsilon \,\quad \mbox{for all}\,\ y\in S_{X}.\]
\end{definition}
This definition, at least for separable spaces, is a generalisation of the concept of lushness introduced in \cite{KVM} which has a connection with the numerical index of operators. For more spaces with MUP, the authors of \cite{THL} further introduced the concept of local-generalized-lushness.
\begin{definition}
A Banach space $X$ is said to be a local-GL-space if for every separable
subspace $E\subset X$, there is a GL-subspace $F\subset X$ such that $E\subset F\subset X$.
\end{definition}
 In \cite{THL}, it is shown that that all local-GL-spaces (and consequently
all GL-spaces, all lush spaces) possess the MUP. Moreover many stable properties for GL-spaces are established in
\cite{THL}, for example, it is established that the
class of GL-spaces is stable under $c_0$, $l_1$ and $l^\infty$-sums (\cite[Theorem 2.11 and Proposition 2.12]{THL}) and that if $X$ is a GL space then so is the space $C(K,X)$ of all continuous
functions from  any compact Hausdorff space $K$ into $X$ (\cite[Theorem 2.10]{THL}).
 Later Jan-David Hardtke in \cite{H} stated that a large class of
GL-spaces is stable under ultraproducts and under passing to a large
class of $F$-ideals, in particular to $M$-ideals. And more, he introduced in \cite{H} (with the help of an anonymous referee as is mentioned in the \cite[2.4 Lush spaces]{H2}) the following (at least formally) weaker version
of GL-spaces:
\begin{definition}\label{def1}
A Banach space $X$ is said to have the property $(**)$ if for all $x_1,x_2\in S_X$ and every $\varepsilon>0$, there exists a slice $S_{x^*}:=S(x^{*},\varepsilon)$ with $x^{*}\in S_{X^{*}}$ such that
\begin{equation}\label{equ:24}
x_1\in S_{x^*}\,\,\, \mbox{and}\,\, \mbox{dist}(x_2,S_{x^*})+\mbox{dist}(x_2,-S_{x^*})<2+\varepsilon.
\end{equation}
\end{definition}
 Throughout what follows, we shall
freely use without explicit mention an elementary fact that Definition \ref{def1} is equivalent to another one where the assumption: $x_1,x_2\in S_X$ is replaced by $x_1\in S_X$ and $x_2\in B_X$.
It should be remarked that the following observations were made in \cite{H}.
\begin{enumerate}
  \item Every lush space has the property ($**$).
  \item For separable spaces, ($**$) is equivalent to GL.
  \item  Every space with the property ($**$) has the MUP.
\end{enumerate}

Very recently, a stability results that $X$ having the property ($**$) implies that $L_1(\mu, X)$ and $L_\infty(\mu,X)$ also have the the property ($**$) with $(\Omega,\Sigma,\mu)$ being a $\sigma$-finite measure space  has been proved in \cite[Theorem 4.8]{H2} by a reduction theorem.  In fact, this reduction theorem is shown in \cite{H2} for a large class of spaces that enjoy a certain type of geometric properties, such as octahedrality, almost
squareness, lushness, the Daugavet property and so on. In the earlier time, stronger stability results for lushness have already been stated in recent monograph \cite{MRK}: $C(K,X)$ is lush if and only if $X$ is, and the same results hold for $L_1(\mu, X)$ and $L_\infty(\mu,X)$. The aim of this paper is to demonstrate that these results remain true for the property ($**$) in the same spaces.

Let us make a comment here on vector-valued function spaces for GL-spaces. We only know that if $X$ is a GL-space, then so are $C(K,X)$ (\cite[Theorem 2.10]{THL}) and $L_1(\mu,X)$ (\cite[Theorem 5.1]{H2}). It is not known whether this is true for $L_\infty(\mu, X)$ nor if $X$ is a GL-space whenever $ C(K,X)$, $L_1(\mu,X)$ or $L_\infty(\mu,X)$ is a GL-space, where $X$ is non-separable.

Throughout the paper, given a compact Hausdorff topological
space $K$ and a Banach space $X$, $C(K, X)$ is the Banach space of all continuous
functions from $K$ into $X$ endowed with the supremum norm.  Given a $\sigma$-finite
measure space  $(\Omega,\Sigma,\mu)$, for $A\in\Sigma$, $\chi_A$ is the characteristic function of $A$, and for a Banach space $X$,
$L_\infty(\mu, X)$ is the Banach space of
all (clases of) measurable functions $f$ from $\Omega$ into $X$ which are essentially bounded,
endowed with the essential supremum norm
\begin{center}
$\|f\|_{\infty}=$ess sup$\{\|f(t):t\in\Omega\}$.
\end{center}
$L_1(\mu, X)$ is the Banach space of all
(clases of) Bochner-integrable functions from $\Omega$ into $X$, endowed with the integral
norm
 $$ \|f\|_1=\int_\Omega \|f(t)\|du(t).$$

 \section{the results}

Our aim is to present several results concerning the property ($**$) for vector-valued function spaces.
We begin this with the spaces of continuous functions. The proof of the  ``only if'' part of the following result is an easy adaptation of \cite[Theorem 2.10]{THL}. We present it here for completeness.
\begin{theorem}\label{CK-theorem}
Let $K$ be a compact Hausdorff topological space, and let $X$ be a Banach space. Then $X$ has the property ($**$) if and only if $C(K,X)$ has the property ($**$).
\end{theorem}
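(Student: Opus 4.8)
The plan is to prove the two implications separately. The ``only if'' direction localises the witnessing functional of $X$ to a point of $K$, as the authors indicate; the ``if'' direction is the substantial one, and I would handle it by representing the witnessing functional on $C(K,X)$ as a scalar measure on $K\times B_{X^*}$ and localising that. So, for the ``only if'' part, assume $X$ has ($**$) and take $f_1\in S_{C(K,X)}$, $f_2\in B_{C(K,X)}$ and $\varepsilon>0$. Since $t\mapsto\|f_1(t)\|$ is continuous on the compact set $K$, I choose $t_0\in K$ with $\|f_1(t_0)\|=1$ and apply ($**$) in $X$ to the pair $f_1(t_0)\in S_X$, $f_2(t_0)\in B_X$, obtaining $x^*\in S_{X^*}$ with $x^*(f_1(t_0))>1-\varepsilon$ and points $u,v\in B_X$ with $x^*(u)>1-\varepsilon$, $x^*(v)<-(1-\varepsilon)$ and $\|f_2(t_0)-u\|+\|f_2(t_0)-v\|<2+\varepsilon$. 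Lifting $x^*$ to the norm-one evaluation functional $\Phi(f):=x^*(f(t_0))$ gives $f_1\in S(\Phi,\varepsilon)$. To control the distances I would take a continuous $\phi\colon K\to[0,1]$ with $\phi(t_0)=0$ equal to $1$ off a small neighbourhood of $t_0$ and set $g:=\phi f_2+(1-\phi)u$, $h:=\phi f_2+(1-\phi)v$; convexity keeps $g,h\in B_{C(K,X)}$, the values at $t_0$ put $g\in S(\Phi,\varepsilon)$ and $h\in-S(\Phi,\varepsilon)$, and since $f_2-g=(1-\phi)(f_2-u)$ is supported near $t_0$ the norms $\|f_2-g\|_\infty,\|f_2-h\|_\infty$ approach $\|f_2(t_0)-u\|,\|f_2(t_0)-v\|$ as the neighbourhood shrinks, yielding $\mathrm{dist}(f_2,S(\Phi,\varepsilon))+\mathrm{dist}(f_2,-S(\Phi,\varepsilon))<2+\varepsilon$ (running ($**$) in $X$ with a slightly smaller parameter to absorb the slack).

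For the ``if'' part, assume $C(K,X)$ has ($**$) and fix $x_1,x_2\in S_X$, $\varepsilon>0$. Applying ($**$) in $C(K,X)$ to the constant functions $\widehat{x_1},\widehat{x_2}$ gives $\Phi\in S_{C(K,X)^*}$ with $\Phi(\widehat{x_1})>1-\varepsilon$ and functions $g\in S(\Phi,\varepsilon)$, $h\in-S(\Phi,\varepsilon)$ in $B_{C(K,X)}$ with $d_1:=\|\widehat{x_2}-g\|_\infty$, $d_2:=\|\widehat{x_2}-h\|_\infty$ satisfying $d_1+d_2<2+\varepsilon$; note $\|x_2-g(t)\|\le d_1$ and $\|x_2-h(t)\|\le d_2$ for all $t$. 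The key device is the isometric embedding $J\colon C(K,X)\to C(K\times B_{X^*})$, $Jf(t,x^*)=x^*(f(t))$, with $B_{X^*}$ weak$^*$-compact; extending $\Phi\circ J^{-1}$ by Hahn--Banach and applying Riesz representation produces a regular signed Borel measure $\nu$ on $\Omega:=K\times B_{X^*}$ with $|\nu|(\Omega)=1$ and $\Phi(f)=\int_\Omega x^*(f(t))\,d\nu(t,x^*)$. In particular $\int_\Omega x^*(x_1)\,d\nu>1-\varepsilon$, $\int_\Omega x^*(g(t))\,d\nu>1-\varepsilon$ and $\int_\Omega x^*(h(t))\,d\nu<-(1-\varepsilon)$.

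The heart of the argument is to convert these three integral inequalities into a single point at which the corresponding near-extremal conditions all hold. Writing $d\nu=\sigma\,d|\nu|$ with $\sigma=\pm1$ and summing, the function $\Psi:=\sigma\bigl(x^*(x_1)+x^*(g(t))-x^*(h(t))\bigr)$ satisfies $\int_\Omega\Psi\,d|\nu|>3-3\varepsilon$ while $\Psi\le3$ pointwise, so a Chebyshev estimate gives $\Psi>3-\sqrt\varepsilon$ on a set of positive $|\nu|$-measure. Picking a point $(t_0,x_0^*)$ there and setting $x^*:=\sigma(t_0,x_0^*)\,x_0^*\in B_{X^*}$, the fact that each of the three summands is at most $1$ forces $x^*(x_1)>1-\sqrt\varepsilon$, $x^*(g(t_0))>1-\sqrt\varepsilon$ and $x^*(h(t_0))<-(1-\sqrt\varepsilon)$ simultaneously; the sign $\sigma$ is absorbed exactly because these requirements are symmetric under $x^*\mapsto-x^*$. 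Normalising $x^*$ (its norm exceeds $1-\sqrt\varepsilon$) and taking the witnesses $u:=g(t_0)$, $v:=h(t_0)\in B_X$ yields $x_1\in S(x^*,\sqrt\varepsilon)$, $\mathrm{dist}(x_2,S(x^*,\sqrt\varepsilon))\le d_1$ and $\mathrm{dist}(x_2,-S(x^*,\sqrt\varepsilon))\le d_2$, so the sum is $<2+\varepsilon$; running this with $\varepsilon$ small and enlarging the final slice delivers ($**$) for $X$ with any prescribed parameter.

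The main obstacle is precisely this localisation in the ``if'' direction. Restricting $\Phi$ to the constant functions only recovers the total mass $\nu(\Omega)\in X^*$, which does not control $x^*(g(t))$ at any individual $t$, so no witness can be read off from the constants alone; passing to the measure $\nu$ on $K\times B_{X^*}$ and extracting one good pair $(t_0,x_0^*)$—together with the observation that the sign of $\nu$ is harmless—is what makes the single-point witnesses $g(t_0),h(t_0)\in B_X$ available. The surrounding estimates are then routine bookkeeping in $\varepsilon$.
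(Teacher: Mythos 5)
Your proof is correct, but your ``if'' direction takes a genuinely different route from the paper's; the ``only if'' half (evaluation at a point where $\|f_1(\cdot)\|$ attains $1$, plus a Urysohn function to push the witnesses into $C(K,X)$ without disturbing the distances) is essentially the paper's own argument. For the converse, the paper needs no representation theory at all: having obtained $g_1,-g_2\in S(f^*,\varepsilon/8)$ with $\|f_2-g_1\|_\infty+\|f_2-g_2\|_\infty<2+\varepsilon/4$, it applies $f^*$ to the single function $g_1-g_2+x_1\chi_K$, concludes $\|g_1-g_2+x_1\chi_K\|_\infty>3-\varepsilon/2$, uses compactness of $K$ to attain this sup-norm at some $t_0$, and takes a Hahn--Banach functional $x^*\in S_{X^*}$ norming $g_1(t_0)-g_2(t_0)+x_1$; since each of the three summands has norm at most $1$ and their $x^*$-values sum to more than $3-\varepsilon/2$, each value exceeds $1-\varepsilon/2$, so $x_1$, $y_1=g_1(t_0)$ and $-y_2=-g_2(t_0)$ all lie in $S(x^*,\varepsilon)$, and the distance estimate follows by evaluating $\|f_2-g_i\|_\infty$ at $t_0$. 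Your argument reaches the same simultaneous near-norming at a point, but through the embedding $Jf(t,x^*)=x^*(f(t))$ of $C(K,X)$ into $C(K\times B_{X^*})$, a Hahn--Banach extension, the Riesz representation, and a Chebyshev estimate on the integrated three-term sum $\sigma\bigl(x^*(x_1)+x^*(g(t))-x^*(h(t))\bigr)$. This is sound --- take $\sigma=\chi_P-\chi_N$ from a Hahn decomposition so that pointwise evaluation on a positive-measure set is legitimate, and your final enlarge-the-slice step works because slices grow and distances shrink as the parameter increases --- and it has the conceptual merit of localising the dual functional itself rather than a cleverly chosen element, in the spirit of arguments used for lushness in vector-valued settings. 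What it costs is the measure-theoretic machinery and a $\sqrt\varepsilon$ loss from Chebyshev, both of which the paper's one-line norm-attainment trick renders unnecessary: the good point $t_0$ is found not by decomposing $\Phi$ but by evaluating the norm of an auxiliary function built from the witnesses $g_1,g_2$ themselves, which is exactly how the obstacle you describe in your closing paragraph dissolves.
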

 \begin{proof}
We first show the ``only if" part.  Let $f_{1},f_2 \in S_{C(K,X)}$ and $\varepsilon>0$. It is clear that there exists a $t_{0}\in K$ such that $\|f_{1}(t_{0})\|=1$.  Since $X$ has  the property ($**$), it follows that there exists a slice $S_{x^*}:=S(x^{*},\frac{\varepsilon}{4})$ with $x^{*}\in S_{X^{*}}$ such that $f_{1}(t_{0})\in S_{X^{*}}$ and \[\mbox{dist}\left(f_{2}(t_{0}),S_{x^{*}}\right)+\mbox{dist}\left(f_{2}(t_{0}),-S_{x^{*}}\right)<2+\frac{\varepsilon}{4}.\]
 Namely, we can find $y_{1}\in S_{x^{*}}$ and $y_{2}\in -S_{x^{*}}$ such that \[\left\|f_{2}(t_{0})-y_1\right\|+\left\|f_{2}(t_{0})-y_{2}\right\|<2+\frac{\varepsilon}{2}.\]
Define a functional $f^*\in S_{C(K,X)^{*}}$ by $f^{*}(f)=x^{*}(f(t_{0}))$ for every $f\in C(K,X)$.
Obviously, $f_{1}\in S_{f^*}:=S(f^*,\varepsilon)$, and there is a continuous map $\phi:K\rightarrow [0,1]$  which satisfies
\begin{center}
$\phi(t_{0})=1$ \quad and \quad $\phi(t)=0$ \quad if $\|f_2(t)-f_{2}(t_{0})\|\geqslant\frac{\varepsilon}{4}$.
\end{center}
Let $g_{i}(t)=\phi(t)y_{i}+(1-\phi(t))f_{2}(t)$ for every $t\in K$ and for $i=1,2$. Then it is easily checked that $g_{1}\in S_{f^*}$ and $g_{2}\in -S_{f^*}$. Moreover,
\begin{equation*}
\|g_1-f_2\|+\|f_2-g_2\|<2+\varepsilon.
\end{equation*}
Hence $C(K,X)$ has the property ($**$).

Now let us prove the ``if" part. For any $x_{1},x_{2}\in S_{X}$, let $f_{1}=x_{1}\chi_{K}$ and $f_{2}=x_{2}\chi_{K}$. Then we have $f_{1},f_{2}\in S_{C(K,X)}$. Since $C(K,X)$ has the property ($**$), for every $\varepsilon>0$ there exists an $f^{*}\in S_{C(K,X)^{*}}$ such that $f_{1}\in S_{f^*}:=S(f^{*},\frac{\varepsilon}{8})$ and
\[\mbox{dist}\left(f_{2},S_{f^*}\right)+\mbox{dist}\left(f_{2},-S_{f^*}\right)<2+\frac{\varepsilon}{8}.\]
 This means that there are $g_{1},-g_{2}\in S_{f^*}$ such that
 \begin{equation*}
 \|f_{2}-g_{1}\|+\|f_{2}-g_{2}\|<2+\frac{\varepsilon}{4}.
\end{equation*}
Note that we can find a $t_{0}\in K$ such that $\|g_{1}-g_{2}+x_{1}\chi_{K}\|=\|g_{1}(t_{0})-g_{2}(t_{0})+x_{1}\|$. By the Hahn-Banach theorem, there exists an $x^{*}\in S_{X^{*}}$ such that
\begin{equation*}
x^{*}(g_{1}(t_{0})-g_{2}(t_{0})+x_{1})=\|g_{1}(t_{0})-g_{2}(t_{0})+x_{1}\|.
\end{equation*}
Set $y_{1}:=g_{1}(t_{0})$, $y_{2}:=g_{2}(t_{0})$ and $S_{x^*}:=S(x^{*},\varepsilon)$. We deduce from
\begin{equation*}
  \|g_{1}(t_{0})-g_{2}(t_{0})+x_{1}\|\geq f^{*}(g_{1}-g_{2}+x_{1}\chi_{K})>3-\frac{\varepsilon}{2}
\end{equation*}
 that $x^{*}(x_{1})>1-\varepsilon/2$. Otherwise,
\begin{align*}
 3-\frac{\varepsilon}{2} &<\|g_{1}(t_{0})-g_{2}(t_{0})+x_{1}\|\\&=x^{*}(g_{1}(t_{0})-g_{2}(t_{0})+x_{1})\leq1+1+1-\frac{\varepsilon}{2}=3-\frac{\varepsilon}{2},
\end{align*}
a contradiction. Thus $x_{1}\in S_{x^*}$. In a similar way, we can obtain $y_{1},-y_{2}\in S_{x^*}$. Moreover, it is easy to see that
\begin{align*}
\|x_{2}-y_{1}\|+\|x_{2}-y_{2}\|&=\|f_{2}(t_0)-g_{1}(t_{0})\|+\|f_{2}(t_0)-g_{2}(t_{0})\|\\
&\leq\|f_{2}-g_{1}\|+\|f_{2}-g_{2}\|<2+\varepsilon.
\end{align*}
So $X$ has the property ($**$). The proof is complete.
 \end{proof}
We will deal with the property ($**$) for $L_\infty(\mu,X)$ and $L_1(\mu,X)$.
Very recently, it has been shown in \cite[Theorem 4.8]{H2} that if $X$ has the property ($**$), then $L_1(\mu, X)$ and $L_\infty(\mu,X)$ also have the property ($**$). In fact, even more general reduction theorem is proved in \cite{H2} for a large class of spaces, such as octahedral and almost square spaces, lush spaces and so on. However, we do not think the converse of the previous result, that is if $L_1(\mu,X)$ or $L_\infty(\mu,X)$ has the property ($**$), then so does $X$, can be deduced from this reduction theorem. Additionally, it may be necessary to provide a direct proof for the fact that $L_1(\mu,X)$ and $L_\infty(\mu,X)$ enjoy the property ($**$) whenever $X$ does.

 To simplify the notation, we will use the following notation during the proof of the theorems:
\begin{equation*}
\Sigma^+:=\{A\in\Sigma: 0<\mu(A)<\infty\}.
\end{equation*}

 \begin{theorem} \label{infty-theorem}
Let $X$ be a Banach space, and let $(\Omega,\Sigma,\mu)$ be a $\sigma$-finite measure space. Then $X$ has the property ($**$) if and only if $L_{\infty}(\mu,X)$ has the property ($**$).
\end{theorem}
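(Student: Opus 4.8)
The plan is to prove the two implications directly, using the two halves of Theorem~\ref{CK-theorem} as a template: the pointwise evaluation at a single $t_0\in K$ that drives the $C(K,X)$ argument will be replaced by localisation to a set $A\in\Sigma^+$ together with a.e.\ evaluation, and the point-mass functional $f\mapsto x^*(f(t_0))$ by a normalised average $f\mapsto \frac{1}{\mu(A_0)}\int_{A_0}x^*(f)\,d\mu$. The ``if'' direction is the genuinely new one, since it cannot be read off from the reduction theorem of \cite{H2}.

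For the ``only if'' part I would fix $f_1,f_2\in S_{L_\infty(\mu,X)}$ and $\varepsilon>0$. Since $\|f_1\|_\infty=1$ and $\mu$ is $\sigma$-finite, choose $A\in\Sigma^+$ on which $\|f_1(t)\|>1-\delta$. Using strong measurability (essential separable-valuedness), shrink $A$ to $A_0\in\Sigma^+$ on which $f_1,f_2$ are each within $\delta$ of constants $v_1,v_2$; taking $v_2$ to be an actual value $f_2(t^*)$ keeps $\|v_2\|\le1$. Applying the property ($**$) of $X$ (in its $B_X$-form) to $u_1:=v_1/\|v_1\|$ and $v_2$ yields $x^*\in S_{X^*}$, a tolerance $\alpha$, and $y_1\in S(x^*,\alpha)$, $y_2\in-S(x^*,\alpha)$ with $\|v_2-y_1\|+\|v_2-y_2\|<2+\alpha$. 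Define $f^*(f)=\frac{1}{\mu(A_0)}\int_{A_0}x^*(f)\,d\mu\in S_{L_\infty(\mu,X)^*}$ and set $g_i=y_i\chi_{A_0}+f_2\chi_{\Omega\setminus A_0}$. Then $f^*(f_1)>1-\varepsilon$, $f^*(g_1)=x^*(y_1)>1-\varepsilon$ and $-f^*(g_2)=-x^*(y_2)>1-\varepsilon$, so $f_1,g_1\in S(f^*,\varepsilon)$ and $g_2\in-S(f^*,\varepsilon)$; moreover $\|f_2-g_i\|_\infty$ is an essential supremum over $A_0$ of $\|f_2(t)-y_i\|\le\|v_2-y_i\|+2\delta$, whence $\|f_2-g_1\|_\infty+\|f_2-g_2\|_\infty<2+\varepsilon$ once $\delta,\alpha$ are small.

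For the ``if'' part I would fix $x_1,x_2\in S_X$, pick any $A\in\Sigma^+$, and test the property ($**$) of $L_\infty(\mu,X)$ on $f_1=x_1\chi_A$, $f_2=x_2\chi_A$. This gives $f^*\in S_{L_\infty(\mu,X)^*}$ with $f_1\in S(f^*,\varepsilon')$, together with $g_1\in S(f^*,\varepsilon')$, $g_2\in-S(f^*,\varepsilon')$ such that $\|f_2-g_1\|_\infty+\|f_2-g_2\|_\infty<2+\varepsilon'$. The decisive device, exactly as in Theorem~\ref{CK-theorem}, is the auxiliary function $h:=g_1-g_2+f_1$: from $f^*(h)=f^*(g_1)-f^*(g_2)+f^*(f_1)>3-3\varepsilon'$ and $\|f^*\|=1$ we get $\|h\|_\infty>3-3\varepsilon'$. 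As $\|h(t)\|\le2$ off $A$, the set $B:=\{t\in A:\|h(t)\|>3-3\varepsilon'-\eta\}$ has positive measure, and I can select $t_1\in B$ at which simultaneously $\|g_i(t_1)\|\le1$ and $\|f_2(t_1)-g_i(t_1)\|\le\|f_2-g_i\|_\infty$ (all a.e.\ conditions). Applying Hahn--Banach to $h(t_1)=g_1(t_1)-g_2(t_1)+x_1$ produces $x^*\in S_{X^*}$ with $x^*(h(t_1))=\|h(t_1)\|>3-3\varepsilon'-\eta$; since $x^*(g_1(t_1))$, $-x^*(g_2(t_1))$ and $x^*(x_1)$ are each at most $1$ while their sum exceeds $3-3\varepsilon'-\eta$, each exceeds $1-\varepsilon$. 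Thus $x_1\in S(x^*,\varepsilon)$, and $y_i:=g_i(t_1)$ satisfy $y_1\in S(x^*,\varepsilon)$, $y_2\in-S(x^*,\varepsilon)$. Finally $f_2(t_1)=x_2$ gives $\|x_2-y_1\|+\|x_2-y_2\|\le\|f_2-g_1\|_\infty+\|f_2-g_2\|_\infty<2+\varepsilon$.

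The two steps with no counterpart in the $C(K,X)$ proof are both measure-theoretic, and I expect them to be the main obstacles. In the ``only if'' direction the difficulty is the reduction of $f_1,f_2$ to genuine constants $v_1,v_2$ on a set of positive finite measure while keeping $\|f_1\|$ near $1$; this is precisely where strong measurability is used. In the ``if'' direction the apparent obstacle is that $L_\infty(\mu,X)^*$ has no usable concrete description, but the argument avoids this entirely: it never analyses $f^*$ beyond $\|f^*\|=1$ and the slice inequalities, transferring everything to a single a.e.\ point $t_1$ through $h$. Checking that $t_1$ can be chosen to meet finitely many a.e.\ constraints at once (each failing only on a null set) is routine, yet it is the pivot on which the whole reduction turns.
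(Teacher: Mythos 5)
Your proposal is correct and follows essentially the same route as the paper's proof: for the ``only if'' part, localisation of $f_1,f_2$ to near-constants on a set of positive finite measure via essential separable-valuedness, an application of ($**$) in $X$, the averaging functional $f\mapsto x^*\bigl(\frac{1}{\mu(A_0)}\int_{A_0}f\,d\mu\bigr)$, and the patched functions $g_i=y_i\chi_{A_0}+f_2\chi_{\Omega\setminus A_0}$; for the ``if'' part, testing on $x_1\chi_A$, $x_2\chi_A$ and transferring slice membership through the auxiliary element $g_1-g_2+f_1$, whose norm forces pointwise largeness on a positive-measure subset of $A$, followed by Hahn--Banach. The only deviation is minor: where you evaluate $g_1,g_2$ at a single point $t_1$ chosen to satisfy finitely many a.e.\ constraints, the paper instead approximates $g_1,g_2$ by constants $y_1,y_2\in B_X$ on a further positive-measure subset $C\subset B$ (a second use of essential separable-valuedness); both steps are valid, and yours is marginally more direct.
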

\begin{proof}
Suppose first that $X$ has the property ($**$).
 Let $f_{1},f_{2}\in S_{L_{\infty}(\mu,X)}$ and $\varepsilon>0$.  Note that every function in $L_\infty(\mu,X)$ is essentially separably valued. Thus there is an $A_1\in\Sigma^+$ and $x_1\in S_X$ such that
 \begin{equation*}
 \|x_1\chi_{A_1}-f_1\chi_{A_1}\|_\infty<\frac{\varepsilon}{4}.
\end{equation*}
 Consider the function $f_2\chi_{A_1}$. We may also find $A_{2}\in\Sigma^+$ and $x_{2}\in B_X$ such that $A_2\subset A_1$ and $$\|f_{2}\chi_{A_{2}}-x_{2}\chi_{A_{2}}\|_\infty<\frac{\varepsilon}{4}.$$
 Since $X$ has the property ($**$), we can find $x^*\in S_{X^*}$ such that $x_1\in S_{x^*}:=S(x^*,\varepsilon/4)$ and $y_1,-y_2\in S_{x^*}$ satisfying
 \begin{equation*}
   \|y_1-x_2\|+\|x_2-y_2\|<2+\frac{\varepsilon}{4}.
 \end{equation*}
 With $A_2$ and $x^*$ in hand, we can define a functional $f^*\in S_{C(K,X)^*}$ by
 \begin{equation*}
 f^*(f)=x^*\Big(\frac{1}{\mu(A_2)}\int_{A_2}f d\mu\Big)
 \end{equation*}
 for all $f\in C(K,X)$.  Set $g_1:=y_1\chi_{A_2}+f_2\chi_{\Omega\setminus A_2}$ and $g_2:=y_2\chi_{A_2}+f_2\chi_{\Omega\setminus A_2}$.
  Then it is obvious that $f_1, g_1,-g_2\in S_{f^*}:=S(f^*,\varepsilon)$ and
 \begin{align*}
  \|g_1-f_2\|_\infty+\|f_2-g_2\|_\infty&=\|y_1\chi_{A_2}-f_2\chi_{A_2}\|_\infty+\|f_2\chi_{ A_2}-y_2\chi_{A_2}\|_\infty\\
                         &\leq\|y_1-x_2\|+\|x_2-y_2\|+\frac{1}{2}\varepsilon<2+\varepsilon.
 \end{align*}
This thus proves that $L_\infty(\mu,X)$ has the property ($**$).

 Now we deal with the converse.
 Fix $x_{1},x_{2}\in S_{X}$ and $A\in\Sigma^+$. Set $f_{1}=x_{1}\chi_{A}$ and $f_{2}=x_{2}\chi_{A}$. That $L_{\infty}(\mu,X)$ has the property ($**$) produces $f^{*}\in S_{L_{\infty}(\mu,X)^{*}}$ such that $f_{1}\in S_{f^*}:=(f^{*},\frac{\varepsilon}{8})$ and $g_1,-g_2\in S_{f^*}$ such that
 \begin{equation*}
\|g_1-f_{2}\|_\infty+\|f_{2}-g_{2}\|_\infty<2+\frac{\varepsilon}{4}.
 \end{equation*}
Observe that $\|g_1-g_2+f_1\|\geq f^*(g_1-g_2+f_1)>3-\varepsilon/2$. Therefore, there exists $B\subset A$ with $B\in\Sigma^+$ such that
\begin{equation*}
\|g_{1}(t)-g_{2}(t)+x_1\|>3-\frac{\varepsilon}{2}.
\end{equation*}
for all $t\in B$. Similar arguments as above show that there are $y_1,y_2\in B_X$ and $C\in \Sigma^+$ such that $C\subset B$ and
\begin{align*}
  \|y_1\chi_C-g_1\chi_C\|_\infty<\frac{\varepsilon}{8}  \,\, \mbox{and} \,\, \|y_2\chi_C-g_2\chi_C\|_\infty<\frac{\varepsilon}{8}.
\end{align*}
It follows that
\begin{equation*}
  \|y_1-y_2+x_1\|>3-\varepsilon.
\end{equation*}
The Hahn-Banach theorem ensures us that there is a functional $x^*\in S_X$ such that
\begin{equation*}
  x^*(y_1-y_2+x_1)>3-\varepsilon.
\end{equation*}
It follows that $y_1,-y_2,x_1\in S(x^*,\varepsilon)$, and more,
\begin{equation*}
  \|y_1-x_2\|+\|x_2-y_2\|<2+\varepsilon.
\end{equation*}
Thus $X$ has the property ($**$).
\end{proof}
In fact, a minor modification of the proof of \cite[Propsition 2.2]{THL} can provide a stronger conclusion.  This conclusion yields the equivalence of generalised-lushness and the property ($**$) for separable spaces which was previously noted in \cite{H}. We also apply it to show that $X$ has the property ($**$) whenever $L_1(\mu,X)$ does.
Thus for our particular use, we include here its proof.

Given a Banach
space $X$, a subset $G \subset X^*$ is called norming if
$\|x\|=\sup\{ |x^*(x)| : x^*\in G \}$
for every $x\in X$.
\begin{proposition}\label{propsition:1}
 Let $X$ be a Banach space having the property ($**$), and let $X_0\subset X$ be a separable subspace. Suppose that $G\subset S_{X^*}$ is norming and symmetric.
 Then for every $\varepsilon>0$, the set
\begin{align*}
\{x^*\in G: \mbox{dist}(y, S)+\mbox{dist}(y,-S)<2+\varepsilon \
\mbox{ for all}\  y\in S_{X_0} , \mbox{ where } S=S(x^*, \varepsilon )\}
\end{align*}
is a weak$^*$ $G_\delta$-dense subset of the weak$^*$ closure of $G$. In particular, if $X$ is separable, then $X$ is a GL-space.
\end{proposition}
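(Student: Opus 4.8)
The plan is to derive the statement from the Baire category theorem applied to the weak$^*$-compact space $\overline{G}^{\,w^*}$ (compact by Banach--Alaoglu, since $\overline{G}^{\,w^*}\subset B_{X^*}$ is weak$^*$-closed, hence a Baire space). Write $\Phi(x^*,y)=\operatorname{dist}(y,S(x^*,\varepsilon))+\operatorname{dist}(y,-S(x^*,\varepsilon))$ and fix a countable dense subset $\{y_k\}_{k\ge 1}$ of $S_{X_0}$, which exists because $X_0$ is separable. For each $k$ I would consider
\[
W_k=\Big\{x^*\in \overline{G}^{\,w^*}:\ \Phi(x^*,y_k)<2+\tfrac{\varepsilon}{2}\Big\}.
\]
The first, routine step is to check that each $W_k$ is weak$^*$-open: unwinding the two distances, $x^*\in W_k$ precisely when there exist $u,v\in B_X$ with $x^*(u)>1-\varepsilon$, $x^*(v)<-(1-\varepsilon)$ and $\|y_k-u\|+\|y_k-v\|<2+\tfrac{\varepsilon}{2}$; for each fixed admissible pair $(u,v)$ the conditions on $x^*$ cut out a weak$^*$-open set, and $W_k$ is their union.

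The second step, and the main obstacle, is to show that each $W_k$ is weak$^*$-dense in $\overline{G}^{\,w^*}$, with witnesses lying in $G$. This is where property ($**$) enters. Given a basic weak$^*$-neighbourhood of a point $x_0^*\in G$, determined by test vectors $z_1,\dots,z_m$ and a tolerance $\gamma$, I would first pick $x_1\in S_X$ with $x_0^*(x_1)$ close to $1$ (possible since $\|x_0^*\|=1$), and then apply property ($**$) to the pair $(x_1,y_k)$ with a sufficiently small parameter to obtain $w^*$ with $w^*(x_1)>1-\tfrac{\varepsilon}{2}$ and $\Phi(w^*,y_k)<2+\tfrac{\varepsilon}{2}$ (enlarging the slice only decreases the two distances), so that $w^*\in W_k$. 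The delicate point is to arrange that such a witness can be taken \emph{inside} $G$ and weak$^*$-close to $x_0^*$ on $z_1,\dots,z_m$; this is exactly where the hypotheses that $G$ be norming and symmetric are used, together with a perturbation/convex-combination argument in the spirit of the density lemmas for lush spaces. I expect this localisation of the witness to be the hardest part of the proof, since property ($**$) by itself only controls the produced functional through the single condition $w^*(x_1)>1-\tfrac{\varepsilon}{2}$.

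Granting density, the Baire category theorem yields that $\bigcap_k W_k$ is a weak$^*$-dense $G_\delta$ in $\overline{G}^{\,w^*}$. To pass from the countable dense set $\{y_k\}$ back to all of $S_{X_0}$ I would use that, for fixed $x^*$, the maps $y\mapsto\operatorname{dist}(y,\pm S(x^*,\varepsilon))$ are $1$-Lipschitz: if $x^*\in\bigcap_k W_k$ and $y\in S_{X_0}$ is chosen with $\|y-y_k\|<\varepsilon/4$, then $\Phi(x^*,y)\le \Phi(x^*,y_k)+2\|y-y_k\|<2+\varepsilon$. Hence every point of $\bigcap_k W_k$ satisfies the required estimate for all $y\in S_{X_0}$, so the set in the statement contains this weak$^*$-dense $G_\delta$ and is therefore weak$^*$-$G_\delta$-dense.

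Finally, for the ``in particular'' assertion I would take $X_0=X$, now separable. Given $x\in S_X$ and $\varepsilon>0$, the set $\{x^*\in\overline{G}^{\,w^*}:x^*(x)>1-\varepsilon\}$ is weak$^*$-open and, since $G$ is norming and symmetric, nonempty (indeed $\sup_{x^*\in G}x^*(x)=\|x\|=1$). Intersecting it with the weak$^*$-dense $G_\delta$ produced above gives a functional $x^*\in G\subset S_{X^*}$ with $x\in S(x^*,\varepsilon)$ and $\Phi(x^*,y)<2+\varepsilon$ for all $y\in S_X$, which is precisely the defining condition of generalized-lushness. Thus $X$ is a GL-space.
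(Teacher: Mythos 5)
Your overall architecture (a countable dense sequence in $S_{X_0}$, weak$^*$-open sets $W_k$, Baire category on the weak$^*$-compact set $\overline{G}^{w^*}$, and a Lipschitz argument to pass from $\{y_k\}$ to all of $S_{X_0}$) is the same as the paper's, and your openness argument coincides with the paper's. But there is a genuine gap exactly at the point you yourself flag as ``the hardest part'': the density of $W_k$ with witnesses lying in $G$. You never carry out this step, and it does not follow from what you set up. Applying ($**$) to a pair $(x_1,y_k)$ only yields some $w^*\in S_{X^*}$ with $w^*(x_1)>1-\varepsilon/2$; this gives no control on the values $w^*(z_1),\dots,w^*(z_m)$ and no reason for $w^*$ to belong to $G$, so your candidate witness need not lie in the prescribed neighbourhood of $x_0^*$. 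The paper closes this gap with two concrete ingredients that your ``perturbation/convex-combination argument'' placeholder does not supply. First, it invokes \cite[Lemma 3.40]{F} to reduce arbitrary basic weak$^*$-neighbourhoods of $x_0^*\in G$ to weak$^*$-slices $S(x,\varepsilon_1)$ determined by a \emph{single} vector $x\in S_X$; this is what makes localisation possible, since membership in the neighbourhood becomes the single scalar condition $z^*(x)>1-\varepsilon_1$. Second, it applies ($**$) to the pair $(x,y_n)$ with parameter $\varepsilon_1/3$, obtaining $x\in S_{y^*}:=S(y^*,\varepsilon_1/3)$ and witnesses $x_n'\in S_{y^*}$, $z_n'\in -S_{y^*}$ with $\|x_n'-y_n\|+\|y_n-z_n'\|<2+\varepsilon_1$; since $x$, $x_n'$ and $-z_n'$ all lie in $S_{y^*}$, one gets $\|x+x_n'-z_n'\|>3-\varepsilon_1$, and \emph{now} the hypotheses that $G$ is norming and symmetric produce $z^*\in G$ with $z^*(x+x_n'-z_n')>3-\varepsilon_1$. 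This single inequality forces $z^*(x)>1-\varepsilon_1$, $z^*(x_n')>1-\varepsilon_1$ and $z^*(-z_n')>1-\varepsilon_1$ simultaneously, i.e.\ $z^*$ lies both in the weak$^*$-slice and in the analogue of your $W_k$ taken inside $G$. This ``norm the sum of three vectors'' trick is the missing idea, and without it the density claim is unproved.

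A secondary defect flows from the same omission: your sets $W_k$ are subsets of $\overline{G}^{w^*}$, not of $G$, so even granting density the Baire intersection $\bigcap_k W_k$ need not be contained in $G$ (points of $\overline{G}^{w^*}$ may even have norm $<1$, in which case $S(x^*,\varepsilon)$ is not a slice in the sense of the definition). The proposition asserts that a specific subset of $G$ is weak$^*$ $G_\delta$-dense, and your final ``in particular'' step asserts, without justification, that the functional obtained lies in $G\subset S_{X^*}$; that membership is precisely what the paper's density argument secures by constructing the witnesses $z^*$ inside $G$.
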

\begin{proof}
Let $\{y_n\}\subset S_{X_0}$ be a sequence dense in $S_{X_0}$.  Fix $0<\varepsilon<1$.  Given $n\geq1$, set
\begin{align*}
K_n=\{x^*\in G:  \ \ \mbox{dist}(y_n, S)+\mbox{dist}(y_n,-S)<2+\varepsilon\ \ \mbox{where} \  \ S=S(x^*, \varepsilon )\}.
\end{align*}
Then $K_n$ is weak$^*$-open and $\overline{K_n}^{\omega^*}=\overline{G}^{\omega^*}$.
Indeed, if $x^*\in K_n$, there exist $x_n\in S(x^*,\varepsilon)$ and $z_n\in -S(x^*,\varepsilon)$ such that
\begin{align*}
\|x_n-y_n\|+\|y_n-z_n\|<2+\varepsilon.
\end{align*}
Let $$U=\{y^*\in G: y^*(x_n)>1-\varepsilon \  \ \mbox {and}  \ \ y^*(-z_n)>1-\varepsilon\} .$$ Then it is easily checked that $U$ is a weak$^*$-neighborhood of $x^*$ in $G$ satisfying  $U\subset K_n$. Thus $K_n$ is weak$^*$-open.

 To prove $\overline{K_n}^{\omega^*}=\overline{G}^{\omega^*}$, it is enough to show that $G\subset \overline{K_n}^{\omega^*}$. Since \cite [Lemma 3.40]{F} states that for every $x^*\in G$, the weak$^*$-slices containing $x^*$ form a neighborhood base of $x^*$, it suffices to prove that for every $x\in S$,  the weak$^*$-slice $S(x,\varepsilon_1)\cap K_n\neq\emptyset$ for all $\varepsilon_1\in (0, \varepsilon)$.  Since $X$ has the property ($**$), there is a slice $S_{y^*}:=S(y^*,\varepsilon_1/3)$ with $y^*\in S_{X^*}$ such that
\begin{align*}
x\in S_y^*\    \ \mbox{and}\   \ \mbox{dist}(y_n,S_{y^*})+\mbox{dist}(y_n,-S_{y^*})<2+\varepsilon_1.
\end{align*}
Thus we may find $x_n'\in S_{y^*}$ and $z_n' \in -S_{y^*}$ such that
\begin{align*}
\|x_n'-y_n\|+\|y_n-z_n'\|<2+\varepsilon_1 \  \ \mbox{and}\ \ \|x+x_n'-z_n'\|>3-\varepsilon_1.
\end{align*}
Note that $G$ is norming and symmetric. Thus there is a $z^*\in  G$ such that
\begin{align*}
z^*(x+x_n'-z_n')>3-\varepsilon_1.
\end{align*}
This implies that $z^*\in S(x, \varepsilon_1)\cap K_n$.

Now set $K=\bigcap_{n\in\mathbb{N}} K_n$. Then by the Baire theorem, $K$ is a weak$^* $ $G_\delta$-dense subset of $\overline{G}^{\omega^*}$. This together with density of $(y_n)$ in $S_{X_0}$ gives the first conclusion and the second conclusion is clear.
\end{proof}
Let us make a remark here. Proposition \ref{propsition:1} combined with Theorem \ref{CK-theorem} establishes that if $C(K,X)$ is a GL-space, then so is $X$ under the assumption that $X$ is separable. The same result holds for the space $L_\infty(\mu,X)$. We do not know if this is true in the general.
 Throughout what follows, we will use the notation
\begin{equation*}
\mathcal{S}(x^*,\alpha):=\{x\in X: x^*(x)>\|x\|-\alpha\},
\end{equation*}
where $x^*\in S_{X^*}$ and $0<\alpha<1$. In this notation, it is obvious that $\mathcal{S}(x^*,\alpha)$ contains the general slice $S(x^*,2\alpha)$ for $0<\alpha<\frac{1}{2}$.

 To show that $X$ has the property ($**$) whenever $L_1(\mu,X)$ does, we need some more lemmas.
\begin{lemma}\label{lem:2}
Let $X$ be a Banach space, and let $y$ be in $S_X$. For every $0<\varepsilon<1$, if there are $x^*\in S_{X^*}$, $x_1\in \mathcal{S}(x^*,\varepsilon/3)$, $x_2\in -\mathcal{S}(x^*,\varepsilon/3)$ such that
\begin{equation*}
  \|x_1-y\|+\|y-x_2\|<\|x_1\|+\|x_2\|+\frac{\varepsilon}{3},
\end{equation*}
then we have $x_1-y, y-x_2\in \mathcal{S}(x^*, \varepsilon)$.
\end{lemma}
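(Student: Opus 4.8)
The plan is to unwind the two defining membership conditions, add the two functional evaluations I wish to estimate, and observe that the vector $y$ telescopes out. First I would record what membership in the sets means: from $x_1\in\mathcal{S}(x^*,\varepsilon/3)$ I get $x^*(x_1)>\|x_1\|-\varepsilon/3$, and from $x_2\in-\mathcal{S}(x^*,\varepsilon/3)$ I get $-x^*(x_2)>\|x_2\|-\varepsilon/3$. Adding these two yields
$$x^*(x_1-x_2)=x^*(x_1)-x^*(x_2)>\|x_1\|+\|x_2\|-\tfrac{2\varepsilon}{3}.$$
Since $x^*\in S_{X^*}$, I also have at my disposal the two trivial upper bounds $x^*(x_1-y)\le\|x_1-y\|$ and $x^*(y-x_2)\le\|y-x_2\|$, which will be the decisive sign information later.

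Next I would combine these with the hypothesis. Abbreviating $a=x^*(x_1-y)$ and $b=x^*(y-x_2)$, the crucial point is that $a+b=x^*(x_1-x_2)$ because the contributions of $y$ cancel, so the lower bound above reads $a+b>\|x_1\|+\|x_2\|-2\varepsilon/3$. On the other hand the assumed inequality is $\|x_1-y\|+\|y-x_2\|<\|x_1\|+\|x_2\|+\varepsilon/3$. Subtracting the second estimate from the first produces
$$\big(a-\|x_1-y\|\big)+\big(b-\|y-x_2\|\big)>-\varepsilon.$$

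Finally I would exploit that each of the two bracketed ``defect'' terms is nonpositive, by the trivial upper bounds from the first step. A sum of two quantities that are each $\le 0$ can exceed $-\varepsilon$ only if each summand individually exceeds $-\varepsilon$; otherwise one term is $\le-\varepsilon$ and the other $\le 0$, forcing the sum to be $\le-\varepsilon$. Hence $a>\|x_1-y\|-\varepsilon$ and $b>\|y-x_2\|-\varepsilon$, which are exactly the assertions $x_1-y\in\mathcal{S}(x^*,\varepsilon)$ and $y-x_2\in\mathcal{S}(x^*,\varepsilon)$. I do not anticipate any genuine obstacle: the whole argument is a short cancellation-plus-additivity computation. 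The only step deserving care is this last splitting, where the sign relations $a\le\|x_1-y\|$ and $b\le\|y-x_2\|$ are indispensable, since without them the single bound on $a+b$ would not separate into the two individual estimates. I would also note in passing that the argument never uses $\|y\|=1$, so the normalization $y\in S_X$ is not actually needed for this lemma.
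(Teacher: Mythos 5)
Your proof is correct and is essentially the paper's argument: the paper runs the very same computation (the cancellation $x^*(x_1-y)+x^*(y-x_2)=x^*(x_1-x_2)$, the two slice lower bounds, and the trivial bounds $x^*(z)\le\|z\|$) as a proof by contradiction, assuming one of the defects is $\le-\varepsilon$ and contradicting the hypothesis, whereas you phrase the same splitting step directly for both conclusions at once. Your side remark that $\|y\|=1$ is never used is also accurate.
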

\begin{proof}
The proof of the two cases $x_1-y\in \mathcal{S}(x^*, \varepsilon)$  and $y-x_2\in \mathcal{S}(x^*, \varepsilon)$  are completely the same. It is enough to prove the first one. Assume, on the contrary, that
$x^*(x_1-y)\leq \|x_1-y\|-\varepsilon$. Then
\begin{align*}
  \|x_1-y\|+\|y-x_2\|&\geq x^*(x_1-y)+\varepsilon+x^*(y-x_2)\\&=x^*(x_1-x_2)+\varepsilon>\|x_1\|+\|x_2\|+\frac{\varepsilon}{3}.
\end{align*}
A contradiction therefore completes the proof.
\end{proof}
\begin{remark}
One can easily check that a converse version of the previous lemma remains true. To be precise, if $x_1-y, y-x_2\in \mathcal{S}(x^*, \varepsilon)$, then \begin{equation*}
  \|x_1-y\|+\|y-x_2\|\leq x^*(x_1-y)+x^*(y-x_2)+2\varepsilon\leq \|x_1\|+\|x_2\|+2\varepsilon.
\end{equation*}
This observation actually provides an approach to find a slice which satisfies \eqref{equ:24}.
\end{remark}
A simple but very useful numerical result appears in \cite[Lemma 8.13]{MRK}. We will also apply it to deal with the property ($**$) in the space $L_1(\mu,X)$. We give the proof for the sake of completeness.
\begin{lemma}\label{lem:1}
Let $\varepsilon>0$ $\delta>0$, and let $\lambda_i\geq 0$ for all $i=1,\cdots,n$. Suppose that $\alpha_i, \beta_i\in\mathbb{R}$ are such that $\alpha_i\leq\beta_i$ for all $i=1,\cdots,n$ and satisfy $(\sum_{i=1}^n\lambda_i\beta_i)-\varepsilon \delta<\sum_{i=1}^n\lambda_i\alpha_i$. Then
\begin{equation*}
  \sum\{\lambda_i: \beta_i-\alpha_i\geq\varepsilon\}<\delta.
\end{equation*}
In particular, if $\sum_{i=1}^n\lambda_i=1$, then
\begin{equation*}
  \sum\{\lambda_i: \beta_i-\alpha_i<\varepsilon\}>1-\delta.
\end{equation*}
\end{lemma}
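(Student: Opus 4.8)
The plan is to reduce everything to a single inequality about the nonnegative quantities $\beta_i-\alpha_i$ and then isolate the contribution of the ``large gap'' indices. First I would rewrite the hypothesis $(\sum_{i=1}^n\lambda_i\beta_i)-\varepsilon\delta<\sum_{i=1}^n\lambda_i\alpha_i$ by moving all terms to one side, which gives the clean statement
\begin{equation*}
\sum_{i=1}^n\lambda_i(\beta_i-\alpha_i)<\varepsilon\delta.
\end{equation*}
The point of this rearrangement is that each summand is now nonnegative: we have $\lambda_i\geq0$ by assumption and $\beta_i-\alpha_i\geq0$ since $\alpha_i\leq\beta_i$. This nonnegativity is the only structural fact the argument will use, and it is what lets us throw away terms freely.

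Next I would introduce the index set $I:=\{i:\beta_i-\alpha_i\geq\varepsilon\}$ and bound the left-hand side from below by keeping only these indices. Because every summand is nonnegative, discarding the indices outside $I$ only decreases the sum, and on $I$ each factor $\beta_i-\alpha_i$ is at least $\varepsilon$; hence
\begin{equation*}
\varepsilon\sum_{i\in I}\lambda_i\;\leq\;\sum_{i\in I}\lambda_i(\beta_i-\alpha_i)\;\leq\;\sum_{i=1}^n\lambda_i(\beta_i-\alpha_i)\;<\;\varepsilon\delta.
\end{equation*}
Dividing through by $\varepsilon>0$ yields $\sum_{i\in I}\lambda_i<\delta$, which is exactly the first displayed conclusion $\sum\{\lambda_i:\beta_i-\alpha_i\geq\varepsilon\}<\delta$.

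Finally, for the ``in particular'' statement I would simply pass to the complementary index set under the normalisation $\sum_{i=1}^n\lambda_i=1$. Since the indices split into those with $\beta_i-\alpha_i\geq\varepsilon$ and those with $\beta_i-\alpha_i<\varepsilon$, the total mass $1$ minus the bound just obtained gives
\begin{equation*}
\sum\{\lambda_i:\beta_i-\alpha_i<\varepsilon\}=1-\sum\{\lambda_i:\beta_i-\alpha_i\geq\varepsilon\}>1-\delta,
\end{equation*}
completing the proof. There is no real obstacle here; the argument is purely a rearrangement followed by discarding nonnegative terms. The only points requiring a moment's care are verifying that $\beta_i-\alpha_i\geq0$ (so that the dropped terms are genuinely nonnegative and the inequality goes the right way) and that $\varepsilon>0$ permits division, both of which are guaranteed by the hypotheses.
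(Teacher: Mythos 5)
Your proof is correct and is essentially the same as the paper's: both isolate the index set $I=\{i:\beta_i-\alpha_i\geq\varepsilon\}$, use nonnegativity of $\lambda_i(\beta_i-\alpha_i)$ to bound $\varepsilon\sum_{i\in I}\lambda_i$ by the full sum, and conclude $\sum_{i\in I}\lambda_i<\delta$ from the hypothesis. Rewriting the hypothesis as $\sum_i\lambda_i(\beta_i-\alpha_i)<\varepsilon\delta$ up front is only a cosmetic rearrangement of the paper's splitting of $\sum_i\lambda_i\beta_i$ over $I$ and its complement.
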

\begin{proof}
Set $I=\{1\leq i\leq n:\beta_i-\alpha_i\geq\varepsilon\}$. Then it is easily seen that
\begin{align*}
  \sum_{i=1}^{n}\lambda_i\beta_i=\sum_{i\in I} \lambda_i\beta_i+\sum_{i\notin I} \lambda_i\beta_i&\geq\sum_{i\in I} \lambda_i(\alpha_i+\varepsilon)+\sum_{i\notin I} \lambda_i\alpha_i\\&=\sum_{i=1}^{n}\lambda_i\alpha_i+\varepsilon\sum_{i\in I} \lambda_i.
\end{align*}
It follows immediately from this and the hypothesis that $\sum_{i\in I} \lambda_i<\delta$. The second conclusion is obvious.
\end{proof}

The same results as \cite[Theorem 2.11]{THL}
also hold for the property ($**$). Although the proofs are actually analogous to those of \cite[Theorem 2.11]{THL}, we give the proof of the $l_1$-sum case since this result is necessary in what follows.
\begin{proposition}\label{proposition2}
Let $\{E_\lambda : \lambda\in\Lambda\}$ be a family of Banach spaces, and let
$E = [\bigoplus_{\lambda\in\Lambda} E_\lambda]_{F}$ where $F=c_0, \, l_\infty \ \mbox{or } \,  l_1$.
Then $E$  has the property ($**$) if and only if each $E_\lambda$ has the property ($**$).
\end{proposition}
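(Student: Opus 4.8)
The plan is to prove both implications directly for the $l_1$-sum, since the $c_0$- and $l_\infty$-cases are easier variants of the same scheme (there one concentrates on a single coordinate of near-maximal norm, exactly as in the proof of \cite[Theorem 2.11]{THL}); the genuinely new work is the case $F=l_1$. Throughout I write an element of $E=[\bigoplus_\lambda E_\lambda]_{l_1}$ as $x=(x_\lambda)$ with $\|x\|=\sum_\lambda\|x_\lambda\|$, and I identify $E^*$ with $[\bigoplus_\lambda E_\lambda^*]_{l_\infty}$, so that a norm-one functional can be prescribed coordinatewise.

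For the ``if'' direction I would fix $x_1,x_2\in S_E$ (so $\sum_\lambda\|a_\lambda\|=\sum_\lambda\|b_\lambda\|=1$, writing $x_1=(a_\lambda)$, $x_2=(b_\lambda)$) and $\varepsilon>0$, and apply the property ($**$) of each $E_\lambda$ to the normalized coordinates. On each coordinate carrying mass I obtain $x_\lambda^*\in S_{E_\lambda^*}$ with $a_\lambda/\|a_\lambda\|\in S(x_\lambda^*,\varepsilon')$, together with points $p_\lambda\in S(x_\lambda^*,\varepsilon')$ and $q_\lambda\in-S(x_\lambda^*,\varepsilon')$ close to $b_\lambda/\|b_\lambda\|$. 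Setting $x^*:=(x_\lambda^*)$ and, crucially, weighting by the masses, $y:=(\|b_\lambda\|p_\lambda)$ and $z:=(\|b_\lambda\|q_\lambda)$, the $l_1$-norm is exactly preserved, so $y,z\in S_E$; moreover $x^*(y)=\sum_\lambda\|b_\lambda\|\,x_\lambda^*(p_\lambda)$ is a convex combination of quantities exceeding $1-\varepsilon'$, whence $y\in S(x^*,\varepsilon)$ and likewise $z\in-S(x^*,\varepsilon)$, while $\|x_2-y\|+\|x_2-z\|=\sum_\lambda(\|b_\lambda-\|b_\lambda\|p_\lambda\|+\|b_\lambda-\|b_\lambda\|q_\lambda\|)<(2+\varepsilon')\sum_\lambda\|b_\lambda\|\le 2+\varepsilon$. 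The value $x^*(x_1)$ is handled the same way. The point to stress is that a naive, unrescaled coordinatewise choice would both leave $B_E$ and make the distances sum to roughly twice the size of the support; weighting by $\|b_\lambda\|$ is exactly what repairs this.

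For the ``only if'' direction, fix $\mu$ and $x_1,x_2\in S_{E_\mu}$, view them as elements $\widehat x_1,\widehat x_2\in S_E$ supported on the single coordinate $\mu$, and apply ($**$) in $E$ with a small parameter $\delta$, obtaining $f^*\in S_{E^*}$, $g\in S(f^*,\delta)$ and $h\in-S(f^*,\delta)$ with $\|\widehat x_2-g\|+\|\widehat x_2-h\|<2+\delta$. From $f^*(g),f^*(-h)>1-\delta$ one reads off $\|g\|,\|h\|>1-\delta$, and expanding the $l_1$-distances coordinatewise and using these norm bounds to estimate the off-$\mu$ mass of $g$ and $h$ from below yields the key inequality $\|x_2-g_\mu\|+\|x_2-h_\mu\|<\|g_\mu\|+\|h_\mu\|+3\delta$. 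Applying the Hahn--Banach theorem to $g_\mu-h_\mu+x_1\in E_\mu$ produces a single $x_\mu^*\in S_{E_\mu^*}$ norming it; comparing with the trivial upper bound $\|g_\mu\|+\|h_\mu\|+1$ forces $x_\mu^*(x_1)>1-3\delta$, $g_\mu\in\mathcal S(x_\mu^*,3\delta)$ and $h_\mu\in-\mathcal S(x_\mu^*,3\delta)$.

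The main obstacle is precisely this last step: the projected witnesses $g_\mu,h_\mu$ lie only in the norm-slices $\mathcal S(x_\mu^*,3\delta)$ and may have norm strictly below $1$, so they need not be genuine slice points of $S(x_\mu^*,\varepsilon)$. I would resolve this either by invoking Lemma \ref{lem:2}, or equivalently and self-containedly by a padding step: pick $r\in S_{E_\mu}$ with $x_\mu^*(r)>1-\delta$ and replace $g_\mu,h_\mu$ by $p:=g_\mu+(1-\|g_\mu\|)r$ and $q:=h_\mu-(1-\|h_\mu\|)r$. These lie in $B_{E_\mu}$, satisfy $p\in S(x_\mu^*,4\delta)$ and $q\in-S(x_\mu^*,4\delta)$, and the padding inflates the distances by at most $(1-\|g_\mu\|)+(1-\|h_\mu\|)$, so the key inequality gives $\|x_2-p\|+\|x_2-q\|<2+3\delta$. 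Choosing $\delta=\varepsilon/4$ then delivers ($**$) for $E_\mu$. I expect the only delicate points to be the bookkeeping of the $\delta$'s and the verification that $\|g\|,\|h\|>1-\delta$ are genuinely available to cancel the off-coordinate mass; everything else is routine.
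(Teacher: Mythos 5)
Your ``if'' direction is essentially the paper's proof: apply ($**$) coordinatewise, assemble $x^*=(x^*_\lambda)$, and weight the witnesses by the masses $\|b_\lambda\|$ --- indeed your rescaling computation is exactly the detail the paper asserts without writing out, and like the paper you defer the $c_0$ and $l_\infty$ cases to the scheme of \cite{THL}. Your ``only if'' direction, however, takes a genuinely different route. The paper never invokes Hahn--Banach there: it uses the coordinate functional $x_\lambda^*/\|x_\lambda^*\|$ of the big slice functional itself, derives the key inequality \eqref{equ:33} together with the norm-slice conditions \eqref{equ:34}, \eqref{equ:35}, and then repairs the fact that the projected witnesses $u_\lambda,v_\lambda$ may be short by a case split: if $\|u_\lambda\|\le\varepsilon/4$ or $\|v_\lambda\|\le\varepsilon/4$, then $\mp y_\lambda$ itself lies in the slice (via Lemma \ref{lem:2}), and otherwise it normalizes $u_\lambda,v_\lambda$ to the sphere; the division by norms bounded below only by $\varepsilon/4$ is precisely why the paper must run the whole argument at the finer scale $\varepsilon^2/4$. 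You instead manufacture a fresh functional by norming $g_\mu-h_\mu+x_1$ (the device the paper uses in its $C(K,X)$ and $L_\infty$ theorems, but not in this proposition) and repair shortness by padding with $(1-\|g_\mu\|)r$ for a near-norming $r$. Both arguments are correct; yours buys cleaner bookkeeping --- every loss stays linear in $\delta$, there is no case analysis and no $\varepsilon^2$ --- while the paper's avoids re-deriving a norming functional.

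Two loose ends, both repairable. First, your Hahn--Banach step ``forces'' the three estimates only once you know the lower bound $\|g_\mu-h_\mu+x_1\|>1+\|g_\mu\|+\|h_\mu\|-3\delta$, which you never state; it does follow in one line from $f^*(g-h+\widehat x_1)>3-3\delta$ together with $\sum_{\delta'\ne\mu}\|g_{\delta'}-h_{\delta'}\|\le 2-\|g_\mu\|-\|h_\mu\|$, i.e.\ the same coordinatewise $l_1$ bookkeeping you already use for the key inequality, so supply that line. Second, in the ``if'' direction your prescription says nothing about coordinates with $b_\lambda\ne0$ but $a_\lambda=0$ (there is no $a_\lambda/\|a_\lambda\|$ to put in a slice), yet your convex-combination claim for $x^*(y)$ needs $x_\lambda^*(p_\lambda)>1-\varepsilon'$ on \emph{every} coordinate of the support of $x_2$. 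The paper has the same hole, and its explicit choice $x_\lambda^*=0$ on such coordinates is in fact wrong: in $l_1^2$ with $x_1=(1,0)$, $x_2=(0,1)$ that choice gives a slice $S$ with $\mathrm{dist}(x_2,S)+\mathrm{dist}(x_2,-S)\ge 4-4\varepsilon$, so \eqref{equ:24} fails. The repair is easy and worth recording: on each such coordinate choose $x_\lambda^*$ nearly norming $b_\lambda$ and take $p_\lambda=b_\lambda/\|b_\lambda\|=-q_\lambda$, which contributes $2\|b_\lambda\|$ to the distance sum and keeps both slice conditions intact.
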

\begin{proof}
In the $l_1$-sum case, we first show the ``if" part. Given $x=(x_\lambda), y=(y_\lambda)\in S_E$ and $\varepsilon>0$, for each $\lambda$ with $x_\lambda\neq 0$, there is a corresponding slice  $S_\lambda:=S(x_\lambda^*, \varepsilon)$ with $x_\lambda^*\in S_{E_\lambda^*}$ such that
\begin{align*}
x_\lambda^*(x_\lambda)>(1-\varepsilon )\|x_\lambda\| \  \mbox{and} \   \ \mbox{dist}(\frac{y_\lambda}{\|y_\lambda\|}, S_\lambda)+\mbox{dist}(\frac{y_\lambda}{\|y_\lambda\|},-S_\lambda)<2+\varepsilon,
\end{align*}
where $y_\lambda\neq0$. Then $x^*=(x_\lambda^*)\in S_{E^*}$ with $x_\lambda^*=0$ whenever $x_\lambda=0$, and the required slice satisfying (\ref{equ:24}) is $S(x^*,\varepsilon)$. Therefore $E$ has the property ($**$).

For the ``only if" part, fix $x_\lambda,y_\lambda \in S_{E_\lambda}$ and $0<\varepsilon<1/16$. Then $x=(x_\delta),y=(y_\delta)\in S_E$ where $x_\delta=y_\delta=0$ for all $\delta\neq \lambda$. Since $E$ has the property ($**$), there is an $x^*=(x_\delta^*)\in S_{E^*}$ with $S:=S(x^*,\varepsilon^2/4)$ such that
$$ x\in S\ \  \mbox{and} \   \ \mbox{dist}(y, S)+\mbox{dist}(y,-S)<2+\frac{\varepsilon^2}{4}.$$  We will prove that the slice $S_\lambda:=S(x_\lambda^*/\|x_\lambda^*\| , \varepsilon)$ is the desired one.

It is easily checked that $x_\lambda\in S_\lambda$ and there are $u=(u_\delta)\in S$ and $v=(v_\delta)\in -S$ such that
\begin{align}\label{equ:30}
\|y-u\|+\|y-v\|<2+\frac{\varepsilon^2}{4}.
\end{align}
It follows from the definition of $E$ that
\begin{align}
\|y-u\|+\|y-v\|&=\|y_\lambda-u_\lambda\|+\sum_{\delta\neq \lambda}\|u_\delta\|+\|y_\lambda-v_\lambda\|+\sum_{\delta\neq \lambda}\|v_\delta\| \nonumber\\
&>\|y_\lambda-u_\lambda\|+1-\varepsilon^2/4-\|u_\lambda\|+\|y_\lambda-v_\lambda\|+1-\varepsilon^2/4-\|v_\lambda\|\nonumber \\
&=\|y_\lambda-u_\lambda\|-\|u_\lambda\|+\|y_\lambda-v_\lambda\|-\|v_\lambda\|+2-\varepsilon^2/2.\label{equ:31}
\end{align}
We deduce from (\ref{equ:30}) and (\ref{equ:31}) that
\begin{align}\label{equ:33}
\|y_\lambda-u_\lambda\|+\|y_\lambda-v_\lambda\|<\|u_\lambda\|+\|v_\lambda\|+\varepsilon^2.
\end{align}
On the other hand,
\begin{align}\label{equ:34}
x_\lambda^*(u_\lambda)>1-\varepsilon^2/4-\sum_{\delta\neq \lambda}\|u_\delta\|\geq1-\varepsilon^2/4-1+\|u_\lambda\|=\|u_\lambda\|-\varepsilon^2/4,
\end{align}
and similarly,
\begin{align}\label{equ:35}
x_\lambda^*(-v_\lambda)>\|v_\lambda\|-\frac{\varepsilon^2}{4}.
\end{align}
We apply \eqref{equ:33}, \eqref{equ:34}, \eqref{equ:35} and Lemma \ref{lem:2} to get that
\begin{equation}\label{equ:36}
x_\lambda^*(u_\lambda-y_\lambda)\geq \|u_\lambda-y_\lambda\|-3\varepsilon^2
\end{equation}
and
\begin{equation*}
x_\lambda^*(y_\lambda-v_\lambda)\geq \|y_\lambda-v_\lambda\|-3\varepsilon^2.
\end{equation*}

Therefore, if $\|u_\lambda\|\leq \varepsilon/4$, \eqref{equ:36} yields
\begin{equation*}
x^*_\lambda(-y_\lambda)>1-\varepsilon/4-3\varepsilon^2-\varepsilon/4>1-\varepsilon.
\end{equation*}
This means that $-y_\lambda\in S_\lambda$. Clearly it satisfies $$\mbox{dist}(y_\lambda, S_\lambda)+\mbox{dist}(y_\lambda, -S_\lambda)\leq2<2+\varepsilon.$$
A similar result holds in the case of $\|v_\lambda\|\leq \varepsilon/4$. It remains to consider the case that $\|v_\lambda\|>\varepsilon/4$ and $\|v_\lambda\|>\varepsilon/4$. Put $w_\lambda:=u_\lambda/\|u_\lambda\|$ and $t_\lambda:=v_\lambda/\|v_\lambda\|$. Then $w_\lambda, -t_\lambda \in S_\lambda$ following from  \eqref{equ:34} and \eqref{equ:35} respectively. The desired estimate
\begin{align*}
\|y_\lambda-w_\lambda\|+\|y_\lambda-t_\lambda\|<2+\varepsilon
\end{align*}
 is got directly from (\ref{equ:33}). The proof is complete.
\end{proof}
Now we are ready to work with the property ($**$) for the space $L_1(\mu, X)$.

 \begin{theorem}
Let $X$ be a Banach space, and let $(\Omega,\Sigma,\mu)$ be a $\sigma$-finite measure space. Then $X$ has the property ($**$) if and only if $L_{1}(\mu,X)$ has the property ($**$).
\end{theorem}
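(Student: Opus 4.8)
The plan is to prove both implications directly, running parallel to the proofs of Theorems~\ref{CK-theorem} and \ref{infty-theorem} but replacing the point-evaluation/essential-supremum arguments by the $l_1$-sum stability of Proposition~\ref{proposition2} and the generalized-slice machinery of Lemmas~\ref{lem:1} and \ref{lem:2}, which are tailored to the integral norm. For the ``only if'' part, assume $X$ has ($**$) and fix $f_1,f_2\in S_{L_1(\mu,X)}$ and $\varepsilon>0$. First I would approximate $f_1,f_2$ in $L_1$-norm by simple functions adapted to one common partition, writing $f_i\approx s_i=\sum_{k=1}^n x_{i,k}\chi_{A_k}$ with pairwise disjoint $A_k\in\Sigma^+$. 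The subspace of functions constant on each $A_k$ is, via $(y_k)_k\mapsto\sum_k y_k\chi_{A_k}/\mu(A_k)$, isometric to $[\bigoplus_{k=1}^n X]_{l_1}$ (the weights $\mu(A_k)$ being absorbed isometrically), so by Proposition~\ref{proposition2} it has ($**$). I would obtain there a slice witnessing \eqref{equ:24} for $(s_1/\|s_1\|_1,\,s_2/\|s_2\|_1)$, extend the functional to $L_1(\mu,X)$ by Hahn--Banach (the norm restricts unchanged, so the witnesses stay in their slices), and absorb $\|f_i-s_i\|_1$ and $\bigl|\,\|s_i\|_1-1\,\bigr|$ into $\varepsilon$. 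This direction is essentially routine once the $l_1$-identification is in place.

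For the ``if'' part, assume $L_1(\mu,X)$ has ($**$), fix $x_1,x_2\in S_X$ and $\varepsilon>0$, choose $A\in\Sigma^+$, and set $c=1/\mu(A)$, $f_i=c\,x_i\chi_A\in S_{L_1(\mu,X)}$. I would apply Proposition~\ref{propsition:1} to $L_1(\mu,X)$ with the symmetric norming set $G=\{\,f\mapsto\int\langle g,f\rangle\,d\mu:\ g\text{ simple},\ \|g(t)\|\le1\,\}$ and the separable subspace $\mathrm{span}\{f_1,f_2\}$. Intersecting the weak$^*$-dense good set with the nonempty weak$^*$-open set $\{\Phi:\Phi(f_1)>1-\delta\}$ produces a good functional of the \emph{special} form $\Phi_g$ with $g=\sum_j x_j^*\chi_{B_j}$ simple, $\Phi_g(f_1)>1-\delta$, and $\mathrm{dist}(f_2,S)+\mathrm{dist}(f_2,-S)<2+\delta$ for $S=S(\Phi_g,\delta)$. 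Choosing near-optimal witnesses $h_1\in S$, $-h_2\in S$ gives $\Phi_g(h_1-h_2+f_1)>3-3\delta$, hence $\int\|h_1-h_2+f_1\|\,d\mu>3-3\delta$ while the integrand is everywhere $\le\|h_1\|+\|h_2\|+\|f_1\|$. The numerical Lemma~\ref{lem:1}, applied to the pieces $A\cap B_j$ (whose $\mu$-masses sum to $\mu(A)$), then isolates one piece $A_0=A\cap B_{j_0}\in\Sigma^+$ and a point $t_0\in A_0$ at which, writing $y_i:=h_i(t_0)$ and $x^*:=x_{j_0}^*$, both $x^*(x_1)\approx 1$ and the alignment $\|y_1-y_2+cx_1\|\approx\|y_1\|+\|y_2\|+c$ hold.

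Taking $\psi\in S_{X^*}$ norming $y_1-y_2+cx_1$ then yields $x_1\in S(\psi,\varepsilon)$ and $y_1\in\mathcal S(\psi,\cdot)$, $-y_2\in\mathcal S(\psi,\cdot)$; a second application of Lemma~\ref{lem:1} to the distance condition extracts the estimate $\|y_1-cx_2\|+\|cx_2-y_2\|<\|y_1\|+\|y_2\|+\cdot$ at $t_0$. Lemma~\ref{lem:2} upgrades this to $y_1-cx_2,\ cx_2-y_2\in\mathcal S(\psi,\cdot)$. Rescaling by $1/c$ and normalising, I would aim to produce unit vectors $u_1,-u_2\in S(\psi,\varepsilon)$ with $u_1-x_2,\ x_2-u_2\in\mathcal S(\psi,\varepsilon)$, so that the Remark following Lemma~\ref{lem:2} delivers $\|x_2-u_1\|+\|x_2-u_2\|\le 2+2\varepsilon$, which is \eqref{equ:24} for the pair $(x_1,x_2)$ after relabelling $\varepsilon$.

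The hard part is the very last step, and it is exactly why the generalized slices $\mathcal S(x^*,\alpha)$ and Lemma~\ref{lem:2} are indispensable. In the $C(K)$ and $L_\infty$ settings the pointwise witnesses automatically satisfy $\|h_i(t_0)\|\le 1$, so they lie in $B_X$ and the genuine slice $S(x^*,\varepsilon)$ suffices; here $y_i=h_i(t_0)$ are values of $L_1$-functions and carry no norm bound, living instead at the scale $c=1/\mu(A)$, so a naive normalisation need not keep $u_i-x_2$ inside a slice. The core difficulty is therefore to guarantee that the extracted configuration sits at the correct scale $\|y_i\|\approx c$ (via the interplay of the alignment, the distance estimate, and the pointwise norming on $A_0$) so that \emph{both} $u_i\in S(\psi,\varepsilon)$ and $u_i-x_2\in\mathcal S(\psi,\varepsilon)$ survive passing to unit vectors; Lemma~\ref{lem:2}, being stated purely in terms of $\mathcal S(x^*,\cdot)$ rather than true slices, is precisely the device meant to absorb this scaling and carry the argument through.
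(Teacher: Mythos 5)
Your ``only if'' half is correct and is essentially a repackaging of the paper's argument: where the paper projects onto a finite partition $\pi$ with the conditional expectation $E_\pi$ and writes down the functional $f^*(h)=\sum_{A\in\pi}x_A^*\bigl(\int_A h\,d\mu\bigr)$ explicitly, you identify the $\pi$-simple functions with a finite $l_1$-sum, invoke Proposition~\ref{proposition2}, and extend by Hahn--Banach; since your witnesses lie in the subspace, they remain in the slice of the extension, and the approximation errors are absorbed as you indicate.

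The ``if'' half has a genuine gap, located exactly at the step you yourself flag as ``the hard part'' and leave as an aim rather than a proof. Up to that point your extraction is sound: taking the witnesses $h_i$ simple (by density), one can indeed reach $t_0$, $x^*=x^*_{j_0}$, $y_i=h_i(t_0)$ with $x^*(x_1)\approx1$, $y_1,-y_2\in\mathcal{S}(x^*,\varepsilon')$ and $y_1-cx_2,\ cx_2-y_2\in\mathcal{S}(x^*,\varepsilon')$. But the passage to unit vectors is not something Lemma~\ref{lem:2} can ``absorb'': that lemma only converts alignment into membership of differences, which you already have, and it is blind to scale. What is really needed is an \emph{upper} bound $\|y_i\|\le c(1+\mathrm{small})$ for \emph{both} witnesses, because when $\|y_1\|\gg c$ the membership $y_1-cx_2\in\mathcal{S}(x^*,\varepsilon')$ is nearly vacuous --- the huge vector masks the misalignment. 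Concretely, in the Euclidean plane with $x^*=(1,0)$, $c=1$, $y_1=(M,0)$, $x_2=(0,1)$: for $M$ large both $y_1$ and $y_1-x_2$ lie in $\mathcal{S}(x^*,\varepsilon')$, yet $u_1-x_2=(1,-1)$ fails to lie in $\mathcal{S}(x^*,\varepsilon)$ for every $\varepsilon\le\sqrt{2}-1$. The paper secures this bound for one witness by intersecting the Lemma~\ref{lem:1} good sets (mass $>1-\eta^6$) with the set where $\|x_i\|\le1+\eta^3$, whose mass exceeds $\eta^3/(1+\eta^3)$ because $\sum_i\|x_i\|\mu(A_i)=1$ (see \eqref{equ:11}--\eqref{equ:12}); and --- this is the crux --- the same counting cannot give the bound for both witnesses at once, since two sets of mass of order $\eta^3$ need not meet. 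That is precisely why the paper runs a second localisation stage: it uses the density statement of Proposition~\ref{propsition:1} to (re)choose the functional so that it also admits near-optimal simple witnesses for $y\chi_{A_{i_0}}/\mu(A_{i_0})$, refines the partition inside $A_{i_0}$, and repeats the counting there to produce a sub-piece on which a fresh negative witness obeys the norm bound \eqref{equ:43} while all previously secured memberships persist, as in \eqref{equ:41}; only then does the normalisation computation \eqref{equ:15} go through on both sides. Your proposal contains no counterpart of this two-stage argument, and since it is the essential new idea of the theorem (and the reason this proof is far longer than those of Theorems~\ref{CK-theorem} and \ref{infty-theorem}), the hard direction remains unproved.
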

\begin{proof}
Since $L_1(\mu,X)$ is isometrically isomorphic to an $l_1$-sum of spaces $L_1(\mu_i,X)$ for some finite measures $\mu_i$, we deduce from Proposition \ref{proposition2} that it is enough to deal with finite measure, and by normalizing
the measure, we may assume that $\mu(\Omega)=1$.

Assume that $X$ has the property ($**$). To prove that so does $L_1(\mu, X)$, we will check that \eqref{equ:24} is satisfied. Given $f,g\in S_{L_1(\mu,X)}$ and $\varepsilon>0$, we apply \cite[Lemma III.2.1]{Die} to obtain a partition $\pi$ of $\Omega$ into a finite family of disjoint members of $\Sigma^+$ such that
\begin{align}\label{equ:60}
\|E_{\pi}(f)-f\|_1<\frac{\varepsilon}{8}.
\end{align}
and
\begin{align*}
 \|E_{\pi}(g)-g\|_1<\frac{\varepsilon}{8}.
\end{align*}
 where $E_{\pi}:L_1(\mu,X)\rightarrow L_1(\mu,X)$ is a contractive projection given by
\[E_{\pi}(h)=\sum_{A\in\pi}(\frac{1}{\mu(A)}\int_Ah\,d\mu)\chi_A,\] for all $h\in L_1(\mu,X).$

We set $x_A:=\int_A f \,d\mu$ and $y_A:=\int_A g \,d\mu$. Since $X$ has the property ($**$), there exists an $x_A^*\in S_{X^*}$ with $S_{x_A^*}=S(x_A^*,\varepsilon)$ such that
\begin{align}\label{x_A}
x_A^*(x_A)\geq (1-\frac{\varepsilon}{2})\|x_A\|
\end{align}
and
\begin{align*}
 \|y_A-\|y_A\|z_A^+\|+\|y_A-\|y_A\|z_A^-\|\leq(2+\frac{\varepsilon}{2})\|y_A\|
\end{align*}
where $z_A^+,-z_A^-\in S_{x_A^*}$. Now we can define a functional $f^*\in L_1(\mu,X)^*$ by
$$f^*(h)=\sum_{A\in\pi}x_A^*(\int_A h\,d\mu)$$
for all $h\in L_1(\mu,X).$ Then clearly $f^*\in S_{L_1(\mu,X)^*}.$
We will check that the slice $S_{f^*}=S(f^*,\varepsilon)$ is the desired one. Observe that $f\in S_{f^*}$ is an immediate consequence of (\ref{x_A}) and \eqref{equ:60}.

 Consider the functions $h^+, h^-\in L_1(\mu,X)$ defined by
\[h^+=\sum_{A\in\pi}(\frac{\|y_A\|}{\mu(A)}z^+_A)\chi_A \ \ \mbox{and}\ \ h^-=\sum_{A\in\pi}(\frac{\|y_A\|}{\mu(A)}z^-_A)\chi_A.\]
By the definition of $f^*$ and the partition $\pi$, we see that
$$h^{+}\in S_{f^*} \,\ \mbox{and}\ \, h^{-}\in -S_{f^*}.$$
Furthermore, an easy computation shows that
\begin{align*}
&\|g-h^+\|_1+\|g-h^-\|_1\\
\leq&\|E_{\pi}(g)-g\|_1+\|E_{\pi}(g)-h^+\|_1+\|E_{\pi}(g)-g\|_1+\|E_{\pi}(g)-h^-\|_1\\
\leq&2+\varepsilon/2+\varepsilon/4<2+\varepsilon.
\end{align*}
This thus proves that $L_1(\mu,X)$ has the property ($**$).

 For the converse, we will draw an idea from \cite[Theorem 8.10.(b)]{MRK} where Lemma \ref{lem:1} is applied.
 Fix $x,y\in S_X$, and for every $0<\varepsilon<1/4$, choose $\eta\in(0,1)$ such that $\eta<(\varepsilon/4)^6$. It suffices to show that there is an $x^*\in S_{X^*}$ such that \eqref{equ:24} holds. The hypothesis provides a $g^*\in S_{{L_{1}(\mu,X)}^*}$ such that $x\chi_{\Omega}\in S_{g^*}:=S(g^*, \eta^9/3)$ and
\begin{equation*}
  \mbox{dist}(y\chi_{\Omega}, S_{g^*})+\mbox{dist}(y\chi_{\Omega}, -S_{g^*})<2+\frac{\eta^9}{3}.
\end{equation*}
This and the density of the simple functions in $L_{1}(\mu,X)$ imply that there exist simple functions $g_1\in S_{g^*}$ and $g_2\in -S_{g^*}$ such that
\begin{equation}\label{equ:2}
  \|y\chi_{\Omega}-g_1\|_1+\|y\chi_{\Omega}-g_2\|_1<2+\frac{\eta^9}{3}.
\end{equation}
We may write $g_1=\Sigma_{i=1}^{n}x_i\chi_{A_i}\in S_{g^*}$ and $g_2=\Sigma_{i=1}^{n}y_i\chi_{A_i}\in-S_{g^*}$, where $x_i,y_i\in X$ and $\{A_i\}_{i=1}^{n}\subset \Sigma^+$ is a finite partition of $\Omega$. For each $i=1,\cdots,n$, define a functional $y^*_i\in X^*$ by
 \begin{equation*}
 y_i^*(z)=g^*\Big(\frac{z\chi_{A_i}}{\mu(A_i)}\Big)   \quad ( z\in X).
\end{equation*}
Then it is clear that $\|y_i^*\|\leq 1$ for $i=1,\cdots,n$, and
\begin{align}
  &\sum_{i=1}^{n}y_i^*(x)\mu(A_i)=g^*(x\chi_{\Omega})>1-\frac{\eta^9}{3},\label{equ:44}  \\
   &\sum_{i=1}^{n}y_i^*(x_i)\mu(A_i)=g^*(g_1)>1-\frac{\eta^9}{3}\label{equ:45}
 \end{align}
and
\begin{equation}\label{equ:46}
\sum_{i=1}^{n}y_i^*(-y_i)\mu(A_i)=g^*(-g_2)>1-\frac{\eta^9}{3}.
\end{equation}
Furthermore, by \eqref{equ:2} and Lemma \ref{lem:2}, we have
\begin{equation*}
  f^*(g_1-y\chi_\Omega)>\|g_1-y\chi_\Omega\|_1-\eta^9\,\, \mbox{and}\,\, f^*(y\chi_\Omega-g_2)>\|g_1-y\chi_\Omega\|_1-\eta^9.
\end{equation*}
That is
\begin{equation}\label{equ:47}
  \sum_{i=1}^{n} y_i^*(x_i-y)\mu(A_i)>\sum_{i=1}^{n}\|x_i-y\|\mu(A_i)-\eta^9
\end{equation}
and
\begin{equation}\label{equ:48}
  \sum_{i=1}^{n} y_i^*(y-y_i)\mu(A_i)>\sum_{i=1}^{n}\|y-y_i\|\mu(A_i)-\eta^9.
\end{equation}

Observe that $y^*_i(z)\leq\|z\|$ for all $z\in X$ and for each $i=1,\cdots,n$. Then applying Lemma \ref{lem:1} to the above inequalities \eqref{equ:44}-\eqref{equ:48}, we clearly get
\begin{equation}\label{equ:38}
 \sum\{\mu(A_i): z_i\in \mathcal{S}(y_i^*, \eta^3)\}>1-\eta^6,
\end{equation}

for $\{z_i\}_{i=1}^{n}\in\Big\{\{x\}_{i=1}^n,\{x_i\}_{i=1}^n,\{-y_i\}_{i=1}^{n},\{x_i-y\}_{i=1}^{n},\{y-y_i\}_{i=1}^{n}\Big\}$.

On the other hand, note that
\begin{equation*}
\sum_{i=1}^{n}\|x_i\|\mu(A_i)=\|g_1\|=1.
\end{equation*}
Thus
\begin{equation*}
\sum\{\mu(A_i):\|x_i\|>1+ \eta^3\}<\frac{1}{1+\eta^3}.
\end{equation*}
So
\begin{equation}\label{equ:11}
\sum\{\mu(A_i):\|x_i\|\leq 1+ \eta^3\}>\frac{\eta^3}{1+\eta^3}.
\end{equation}
Since $\eta<\varepsilon^3/64<1/64$, we deduce from \eqref{equ:38} and \eqref{equ:11} that there is some $0\leq i_0\leq n$ such that
\begin{equation}\label{equ:12}
\|x_{i_0}\|\leq 1+ \eta^3
\end{equation}
and
\begin{equation}\label{equ:39}
\{x,x_{i_0},-y_{i_0},x_{i_0}-y,y-y_{i_0}\}\subset S(y_{i_0}^*,\eta^3).
\end{equation}

 For our conclusion, the argument will be divided into three cases.

If $\|x_{i_0}\|\leq \eta$, using that $\|y-x_{i_0}\|\geq 1-\eta$, we apply \eqref{equ:39} to conclude that
\begin{align*}
   y^*_{i_0}(-y)&\geq\|x_{i_0}-y\|-\eta^3-y^*_{i_0}(x_{i_0})\\
  &\geq 1-2\eta-\eta^3>1-\varepsilon.
\end{align*}
Then $x,-y\in S_{y_{i_0}^*}:=S(y_{i_0}^*,\varepsilon)$, and thus $\mbox{dist}(y,-S_{y_{i_0}^*})=0$. So \eqref{equ:24} is already verified.

A similar proof shows that if $\|y_{i_0}\|\leq \eta$, then
\begin{align*}
   y^*_{i_0}(y)\geq 1-2\eta-\eta^3>1-\varepsilon.
\end{align*}
It follows that $S_{y_{i_0}^*}$ is just the desired slice.

 The previous argument also implies that it is only possible that $\|y_{i_0}\|\leq \eta$ or $\|x_{i_0}\|\leq \eta$ since $\|y_{i_0}\|\leq \eta$ and $\|x_{i_0}\|\leq \eta$ cannot hold simultaneously. Thus the remaining case that needs to deal with is that $\|y_{i_0}\|>\eta$ and $\|x_{i_0}\|>\eta$. This and \eqref{equ:39} guarantee that
\begin{equation}\label{equ:50}
y^*_{i_0}\Big(\frac{x_{i_0}}{\|x_{i_0}\|}\Big)>1-\eta^2
\end{equation}
and
\begin{equation*}
 y^*_{i_0}\Big(-\frac{y_{i_0}}{\|y_{i_0}\|}\Big)>1-\eta^2.
\end{equation*}
Moreover, \eqref{equ:39} combined with \eqref{equ:12} establishes that
\begin{align}
y^*_{i_0}\Big(\frac{x_{i_0}}{\|x_{i_0}\|}-y\Big)&\geq\|x_{i_0}-y\|-\eta^3-y^*_{i_0}\Big(x_{i_0}-\frac{x_{i_0}}{\|x_{i_0}\|}\Big)\nonumber\\
&\geq\|x_{i_0}-y\|-\eta^3-(\|x_{i_0}\|-1+\eta^2) \nonumber \\
&\geq\Big\|\frac{x_{i_0}}{\|x_{i_0}\|}-y\Big\|-\Big|1-\|x_{i_0}\|\Big|+1-\|x_{i_0}\|-\eta^3-\eta^2 \nonumber \\
&\geq\Big\|\frac{x_{i_0}}{\|x_{i_0}\|}-y\Big\|-2\eta^3-\eta^3-\eta^2 \nonumber \\
&>\Big\|\frac{x_{i_0}}{\|x_{i_0}\|}-y\Big\|-\varepsilon^9. \label{equ:15}
\end{align}
In fact, the proof will be done provided that \eqref{equ:12} also holds for $y_{i_0}$. However, this cannot be obtained directly. For this reason, we still need to consider the vector $y\chi_{A_{i_0}}/\mu(A_{i_0})\in S_{L_1(\mu,X)}$. Note that for each finite partition $\{A_1,\cdots, A_n\}$ of $\Omega$ and finite vectors $\{x_1\cdots,x_n\}\subset S_X$,  $X_0=\mbox{span}\{x_i\chi_{A_i}: 1\leq i\leq n\}$ is an $n$-dimensional Banach space. By Proposition \ref{propsition:1},
 we may assume that there are simples $f^{+}, -f^{-} \in S(g^*,\eta^9/3)$ such that
\begin{equation*}
  \|f^+-\frac{y\chi_{A_{i_0}}}{\mu(A_{i_0})}\|_1+\|\frac{y\chi_{A_{i_0}}}{\mu(A_{i_0})}-f^{-}\|_1<2+\frac{\eta^9}{3}.
\end{equation*}
We may write
\begin{equation*}
f^{+}=\Sigma_{j=1}^m x_{i_0,j}^{+}\chi_{A_{i_0,j}}+\Sigma_{j=m+1}^{k}x^+_{j}\chi_{B_j}
\end{equation*}
 and
\begin{equation*}
f^{-}=\Sigma_{j=1}^m x_{i_0,j}^{-}\chi_{A_{i_0,j}}+\Sigma_{j=m+1}^{k}x^-_{j}\chi_{B_j},
\end{equation*}
where $\{A_{i_0,1},\cdots, A_{i_0,m}, B_{m+1},\cdots,B_k\}\subset\Sigma^+$ is a finite partition of $\Omega$ such that
$\cup_{j=1}^{m} A_{i_0,j}=A_{i_0}$. Similarly as above, define $y_{i_0,j}^*, x^*_j\in B_{X^*}$ respectively by
\begin{equation*}
y_{i_0,j}^*(z)=g^*\Big(\frac{z\chi_{A_{i_0,j}}}{\mu(A_{i_0,j})}\Big)\,  \quad (z\in X, j=1,\cdots,m)
 \end{equation*}
and
\begin{equation*}
 x_{j}^*(z)=g^*\Big(\frac{z\chi_{B_{j}}}{\mu(B_{j})}\Big) \,\quad  (z\in X, j=m+1,\cdots,k).
\end{equation*}

Since $g^*(f^+)>1-\eta^9$, this together with an observation that
\begin{equation*}
\sum_{j=m+1}^{k}x_j^*(x_{i_0,j}^+)\mu(B_j)\leq\sum_{j=m+1}^{k}\|x_{i_0,j}^+\|\mu(B_j)
\end{equation*}
yields
\begin{align}\label{equ:17}
\sum_{j=1}^{m}y_{i_0,j}^*\big(x_{i_0,j}^+\mu(A_{i_0})\big)\frac{\mu(A_{i_0,j})}{\mu(A_{i_0})}
=&\sum_{j=1}^{m}y_{i_0,j}^*(x_{i_0,j}^+)\mu(A_{i_0,j})\nonumber \\ >&\sum_{j=1}^{m}\|x_{i_0,j}^+\|\mu(A_{i_0,j})-\eta^9\nonumber\\
>&\sum_{j=1}^{m}\big\| x_{i_0,j}^+\mu(A_{i_0})\big\|\frac{\mu(A_{i_0,j})}{\mu(A_{i_0})}-\eta^3 \nonumber\\ >&\sum_{j=1}^{m}\big\|x_{i_0,j}^+\mu(A_{i_0})\big\|\frac{\mu(A_{i_0,j})}{\mu(A_{i_0})}-\varepsilon^9.
\end{align}
Following in the similar line as above, we conclude that \eqref{equ:17} also holds for $\{-x_{i_0,j}^-\}_{j=1}^{m}$, $\{x_{i_0,j}^+-y/\mu(A_{i_0,j})\}_{j=1}^{m}$ and $\{y/\mu(A_{i_0,j})-x_{i_0,j}^-\}_{j=1}^{m}.$
Note that for every $z\in X$, we have
\begin{equation*}
y_{i_0}^*(z)=\sum_{j=1}^{m}y_{i_0,j}^*(z)\frac{\mu(A_{i_0,j})}{\mu(A_{i_0})}.
\end{equation*}
Combining this with \eqref{equ:39}, \eqref{equ:50} and \eqref{equ:15} and noting $\eta<\varepsilon^6$, we obtain
\begin{equation*}
  \sum_{j=1}^{m}y_{i_0,j}^*(z)\frac{\mu(A_{i_0,j})}{\mu(A_{i_0})}>\|z\|-\varepsilon^9
\end{equation*}
for all $z\in \{x,\frac{x_{i_0}}{\|x_{i_0}\|}, \frac{x_{i_0}}{\|x_{i_0}\|}-y\}$.
Thus an application of Lemma \ref{lem:1} again guarantees that
\begin{equation}\label{equ:40}
\sum\{\frac{\mu(A_{i_0,j})}{\mu(A_{i_0})}: z_j\in \mathcal{S}(y^*_{i_0,j}, \varepsilon^3)\}>1-\varepsilon^6,
\end{equation}
for
\begin{equation*}
\{z_j\}\in\Big\{\{x\}, \{\frac{x_{i_0}}{\|x_{i_0}\|}\}, \{\frac{x_{i_0}}{\|x_{i_0}\|}-y\}\Big\}
\end{equation*}
and
\begin{equation*}
\{z_j\}\in\Big\{\{x_{i_0,j}^+\mu(A_{i_0})\}, \{-x_{i_0,j}^-\mu(A_{i_0})\}, \{x_{i_0,j}^+\mu(A_{i_0})-y\},\{y-x_{i_0,j}^-\mu(A_{i_0})\}\Big\}.
\end{equation*}
(Here, we omit the superscript and subscript when confusion
is unlikely).

Observe from $\sum_{j=1}^{m}\|x_{i_0,j}\|\mu({A_{i_0,j}})\leq1$ that
\begin{equation*}
  \sum\{\frac{\mu(A_{i_0,j})}{\mu(A_{i_0})}: \|x_{i_0,j}\|>\frac{1+\varepsilon^3}{\mu(A_{i_0})}\}<\frac{1}{1+\varepsilon^3}.
\end{equation*}
Consequently,
\begin{equation*}
  \sum\{\frac{\mu(A_{i_0,j})}{\mu(A_{i_0})}: \|x_{i_0,j}\|\leq\frac{1+\varepsilon^3}{\mu(A_{i_0})}\}>\frac{\varepsilon^3}{1+\varepsilon^3}.
\end{equation*}
This together with \eqref{equ:40} allows us to conclude that there is a $j_0\in\{1,\cdots,m\}$ such that
\begin{equation}\label{equ:43}
 \|x_{i_0,j_0}^-\mu(A_{i_0})\|\leq1+\varepsilon^3
\end{equation}
and
\begin{equation}\label{equ:41}
\{x, \frac{x_{i_0}}{\|x_{i_0}\|}, \frac{x_{i_0}}{\|x_{i_0}\|}-y,  z_{i_0,j_0}^+, -z_{i_0,j_0}^-, z_{i_0,j_0}^+-y, y-z_{i_0,j_0}^-\Big\}\subset \mathcal{S}(y_{i_0,j_0}^*,\varepsilon^3),
\end{equation}
where $z_{i_0,j_0}^+=x_{i_0,j_0}^+\mu(A_{i_0})$ and $z_{i_0,j_0}^-=x_{i_0,j_0}^-\mu(A_{i_0})$.
Following in an exactly similar way as in the case where $y\chi_\Omega$ is considered, we have
\begin{equation*}
  y_{i_0,j_0}^*(-y)>1-\varepsilon/2-\varepsilon^3>1-\varepsilon \quad \mbox{or} \quad   y_{i_0,j_0}^*(y)>1-\varepsilon/2-\varepsilon^3>1-\varepsilon
\end{equation*}
under the condition that $\|z_{i_0,j_0}^+\|\leq \varepsilon/4$ or $\|z_{i_0,j_0}^-\|\leq\varepsilon/4$, respectively.
We only need to settle the case where $\|z_{i_0,j_0}^+\|>\varepsilon/4$ and $ \|z_{i_0,j_0}^+\|>\varepsilon/4$. A similar argument to that in the case where we get \eqref{equ:15} by using \eqref{equ:43} shows that
\begin{equation*}
 y_{i_0,j_0}^*\Big(y-\frac{x_{i_0,j_0}^-}{\|x_{i_0,j_0}^-\|}\Big)>\Big\|y-\frac{x_{i_0,j_0}^-}{\|x_{i_0,j_0}^-\|}\Big\|-3\varepsilon^3-4\varepsilon^2.
\end{equation*}
On combining this with \eqref{equ:41}, we deduce that
\begin{align*}
&\Big\|\frac{x_{i_0}}{\|x_{i_0}\|}-y\Big\|+\Big\|y-\frac{x_{i_0,j_0}^-}{\|x_{i_0,j_0}^-\|}\Big\|\\
<& y_{i_0,j_0}^*\Big(\frac{x_{i_0}}{\|x_{i_0}\|}-y\Big)+y_{i_0,j_0}^*\Big(y-\frac{x_{i_0,j_0}^-}{\|x_{i_0,j_0}^-\|}\Big)+4(\varepsilon^2+\varepsilon^3)<2+\varepsilon.
\end{align*}
Finally, \eqref{equ:41} proves that the required slice is right $S(y^*_{i_0,j_0},\varepsilon)$.
This completes the proof.
\end{proof}
Let us stress a question on vector-valued function spaces for GL-spaces. It is only known that if $X$ is a GL-space, then so are $C(K,X)$ (\cite[Theorem 2.10]{THL}) and $L_1(\mu,X)$ (\cite[Theorem 5.1]{H2}). We did not know whether this is true for $L_\infty(\mu, X)$ nor if $X$ is a GL-space whenever $ C(K,X)$, $L_1(\mu,X)$ or $L_\infty(\mu,X)$ is in general.

\end{document}